\newtheorem{theorem}{Theorem}[section]
\newtheorem{definition}[theorem]{Definition}
\newtheorem{example}[theorem]{Example}
\newcommand{\Real}{\mathbb R}
\newcommand{\Net}{\mathbb N}
\newcommand{\one}{\mathbb{1}}
\newcommand{\zero}{\mathbb{0}}
\newcommand{\trop}[1]{\mathcal{#1}}
\newcommand{\tB}{\trop{B}}
\newcommand{\tG}{\trop{G}}
\newcommand{\tT}{\trop{T}}
\newcommand{\To}{\longrightarrow }
\newcommand{\ep}{\epsilon}
\newcommand{\al}{\alpha}
\newcommand{\bt}{\beta}
\newcommand{\gm}{\gamma}
\newcommand{\dl}{\delta}
\newcommand{\OP}{\left(}
\newcommand{\CP}{\right)}
    \newenvironment{proof}{
    \smallskip
    \noindent\emph{Proof.}}{\hfill\(\Box\)
    \bigskip
    } \fi
\newcommand{\ifdef}[3]{\ifthenelse{\equal{#1}{true}}{#2}{#3}}
\newcommand{\vmMat}[4]{\OP \begin{matrix}
  #1 & #2 \\
  #3 & #4
\end{matrix}\CP}
\newcommand{\ds}[1]{\ {#1} \ }
\def\semiring0{semiring$^\dagger$}
\def\semirings0{semirings$^\dagger$}
\def\domain0{domain$^\dagger$}
\def\domains0{domains$^\dagger$}
\def\semifield0{semifield$^\dagger$}
\def\semifields0{semifields$^\dagger$}
\def\gperp{ {\perp \joinrel  \joinrel \joinrel  \perp }}
\def\squasilinear{strictly quasilinear}
 \def\tight{strict\  }
\def\hal{\widehat \al}
\def\scompatible{compatible}
\def\compatible{weakly compatible}
\def\nucong{\cong_\nu}
\def\nug{>_\nu}
\def\nuge{\ge_\nu}
\def\nule{\le_\nu}
\def\nule{\le_\nu}
\def\lfun{\ell}
\def\tight\ {strict}
\def\R{\Real}
\def\tT{\mathcal T}
\def\tTz{\tT_\zero}
\def\Fz{F}
\def\tlV{V^*}
\def\tlv{\widetilde v}
\def\tlw{\widetilde w}
\newcommand{\Det}[1]{ \left|{#1}\right|}
\newcommand{\Inu}[1]{\widehat{#1}}
\def\nb{\nabla}
\def\Bnb{{\Bnu{\nabla}}}
\newcommand{\Bnu}[1]{\overline{#1}}
\def\Gker{\operatorname{g-ker}}
\def\nucong{\cong_\nu}
\def\nug{>_\nu}
\def\nuge{\ge_\nu}
\def\nule{\le_\nu}
\def\nule{\le_\nu}
\def\ggsim{\, \, \curlyvee \,}
\def\hsim{ {\underset{{gd}}{\ggsim}}}
\newcommand{\etype}[1]{\renewcommand{\labelenumi}{(#1{enumi})}}
\def\eroman{\etype{\roman}}
\def\pipegs{{\underset{\operatorname{gs}}{\mid}}}
\def\lmod{\mathrel  \pipegs \joinrel\joinrel \joinrel =}
\def\lmodg{\mathrel  \pipegs   \joinrel \joinrel\joinrel =}
\def\tilG{\widetilde{G}}
\def\rank{\operatorname{rank}}
\def\nb{\nabla}
\def\ealph{\etype{\alph}}
\def\base{\tB}
\def\nb{\nabla}
\def\pSkip{\vskip 1.5mm \noindent}
\def\tmap{\varphi}
\def\invr{{\operatorname{-1}}}
\def\rad{\operatorname{rad}}
\def\a{\alpha}
\newtheorem{thm}[theorem]{Theorem}
\newtheorem*{thm*}{Theorem}
\def\Homgs{\operatorname{Hom}_{\operatorname {gs}}}
\newtheorem{lem}[theorem]{Lemma}
\newtheorem{rem}[theorem]{Remark}
\newtheorem{prop*}{Proposition}
\newtheorem{prop}[theorem]{Proposition}
\newtheorem{defn}[theorem]{Definition}
\newtheorem*{examp*}{Example}
\newtheorem*{examples*}{Examples}
\newtheorem*{remark*}{Remark}
\newtheorem*{defn*}{Definition}
\def\lfun{\ell}
\def\R{\Real}
\def\tT{\mathcal T}
\def\tTz{\tT_\zero}
\numberwithin{equation}{section}
\def\M0{M_{\zero}}
\def\SR{R}
\def\tGz{\mathcal G_\zero}
\def\tHz{\mathcal H_\zero}
\def\Rd{R^\dagger}
\def\Fd{F^\dagger}
\def\nb{\nabla}
\def\Bnb{{\Bnu{\nabla}}}
\def\oA{A_{\tB}}
\def\trn{{\operatorname{t}}}
\def\tilL{\widetilde{L}}
\def\otB{\overline{\tB}}
\def\PS{P}
\def\rone{{\one_\SR}}
\def\fone{\one_F}
\def\fzero{\zero_F}
\newcommand{\nPS}[1]{\PS_{(!#1)}}
\newcommand{\nPSo}[1]{\nPS{\one}}
\newcommand{\bil}[2]{\langle{#1},{#2}\rangle}
\newcommand{\adj}[1]{\operatorname{adj}({#1})}
\begin{document}


\title[Dual spaces and bilinear forms]
{Dual spaces and bilinear forms \\[1mm] in supertropical linear algebra}

\author[Z. Izhakian]{Zur Izhakian}
\address{Department of Mathematics, Bar-Ilan University, Ramat-Gan 52900,
Israel} \email{zzur@math.biu.ac.il}

\author[M. Knebusch]{Manfred Knebusch}
\address{Department of Mathematics, University of Regensburg, Regensburg,
Germany} \email{manfred.knebusch@mathematik.uni-regensburg.de}

\author[L. Rowen]{Louis Rowen}
\address{Department of Mathematics, Bar-Ilan University, Ramat-Gan 52900,
Israel} \email{rowen@math.biu.ac.il}

\thanks{The work of the first and third authors has been supported by the Israel Science Foundation, grant 448/09.}

\thanks{The second author was supported in part by the Gelbart Institute at
Bar-Ilan University, the Minerva Foundation at Tel-Aviv
University, the Mathematics Dept. of Bar-Ilan University, and the
Emmy Noether Institute}

\subjclass[2010]{Primary 15A03, 15A09, 15A15, 16Y60; Secondary
14T05, 15A33, 20M18, 51M20}

\date{\today}


\keywords{Tropical algebra, vector space, linear algebra, d-base,
s-base, dual base, change of base semirings, bilinear form.}


\begin{abstract}

Continuing \cite{IzhakianKnebuschRowen2010LinearAlg}, this paper
 investigates finer points of supertropical vector spaces, including dual bases and  bilinear
forms,
 with supertropical versions of standard classical results such as the
 Gram-Schmidt theorem and Cauchy-Schwartz inequality,
 and change of base. We also present the supertropical version of
 quadratic forms, and see how they correspond to symmetric supertropical
 bilinear forms.
\end{abstract}

\maketitle




\section{Introduction}
\numberwithin{equation}{section}

This paper, the continuation of
\cite{IzhakianKnebuschRowen2010LinearAlg}, brings  the analog of
some classical   theorems of linear algebra to the supertropical
setting. The major difference of supertropical linear algebra from
classical linear algebra is that one can have proper subspaces of
the same rank, which we call \textbf{thick}.

 We also consider linear
maps in the supertropical context, for which the equality $\tmap
(v+w) = \tmap (v) + \tmap(w)$ is replaced by the ghost surpassing
relation $\tmap (v+w) \lmod \tmap (v) + \tmap(w)$. Supertropical
linear maps lead us to the notion of the supertropical
\textbf{dual space}. The dual space depends on the choice of thick
subspace with s-base $\tB$, but there is a natural ``dual s-base''
of $\tB$, of the same rank (Theorem~\ref{dualbasethm}). This leads
to rather delicate considerations concerning dual spaces,
including an identification of a space with its double dual in
Theorem~\ref{dualiso}.

 To understand angles, we study  supertropical bilinear
forms. As usual, in the supertropical theory the zero element  is
replaced by the ``ghost ideal.'' This complicates our approach to
bilinear forms, since the theory can be distorted by the inner
product of two elements being a ``large'' ghost. Thus, we
introduce a condition (Definition~\ref{compat}) to control the
$\nu$-value of the inner product, to prevent it from obscuring
tangible angles, which follows from an analog of the Cauchy-Schwartz inequality (cf.~Definition~\ref{CauS}). 
 Then we  also get a supertropical analog of the
Gram-Schmidt process in Lemma~\ref{GS0} and~
Theorem~\ref{decompthm}.

As with the classical theory, one can pass back and forth from
bilinear forms to quadratic forms. Surprisingly, at times  this is
easier in the supertropical theory, as seen in
Theorem~\ref{quadlin}, because many supertropical quadratic forms
satisfy the quasilinear property of Definition~\ref{quasil}.

\subsection{Background}
%

Let us briefly reviewing briefly the supertropical foundations. A
\textbf{semiring without zero}, which we notate as \semiring0, is
 a structure $(\Rd ,+,\cdot, \rone)$ such that $(\Rd ,\cdot \,
,\rone)$ is a monoid and $(\Rd ,+)$ is a commutative semigroup,
with distributivity of multiplication over addition on both sides.
 A \textbf{supertropical \semiring0}  is a
triple $(\Rd,\tG, \nu),$ where $\Rd$ is a \semiring0    and $ \tG
$ is a \semiring0 ideal, called the \textbf{ghost ideal}, together
with an idempotent map
$$\nu : \Rd \To \tG$$ (preserving multiplication as well as
addition) called the \textbf{ghost map on} $\Rd$, satisfying the
following properties, where we write $a^\nu$ for $\nu(a)$:
\begin{enumerate} \ealph
 \item  $a+b   =  a^{\nu } \quad \text{if}\quad a^{\nu } =
 b^{\nu}$; \pSkip
 \item $a+b  \in \{a,b\},\ \forall a,b \in \Rd \ s.t. \  a^{\nu }
\ne b^{\nu }.$

 (Equivalently, $\tG$ is ordered, via $a^\nu \le
b^\nu$ iff $a^\nu + b^\nu = b^\nu$.) \pSkip
\end{enumerate}

  In particular,  $a^\nu = a+a.$ We write $a \nug b$ if $a^{\nu } >
 b^{\nu}$; we stipulate that $a$ and
$b$ are $\nu$-{\bf matched}, written $a \nucong b$,  if $a^{\nu }
=
 b^{\nu}$. We say that  $a$  {\bf dominates} $b$ if  $a \nug  b$;  $a$  {\bf weakly dominates} $b$ if  $a \nuge  b$.

Recall that any commutative supertropical semiring satisfies the
\textbf{Frobenius formula} from
\cite[Remark~1.1]{IzhakianRowen2007SuperTropical}:
\begin{equation}\label{eq:Frobenius} (a+b)^m = a^m + b^m
\end{equation}
for any
    $m \in \Net^+$.

A \textbf{supertropical \semifield0}  is
 a supertropical~\semiring0 $\Fd$ for which  $$\tT : = \Fd \setminus \tG$$  is  a group, such that the map $\nu
|_\tT : \tT \to \tG$ (defined as the restriction from $\nu$ to
$\tT$) is onto. $\tT$ is  called the set of \textbf{tangible
elements} of $\Fd$. Thus, $\tG$ is also a group. 

 A \textbf{supertropical vector space} over a supertropical \semifield0 $\Fd$ is just a \semiring0 module (satisfying the usual
 module axioms,
 cf.~\cite[Definition~2.8]{IzhakianKnebuschRowen2010LinearAlg}).   $V$ has the
distinguished \textbf{standard ghost subspace} $\tHz : = e V,$ as
well as the \textbf{ghost map} $\nu: V \to \tHz,$ given by $\nu(v)
:= v+v = e v$. We write $v^\nu$ for $\nu(v)$.

When dealing with vector spaces,  we will assume for convenience
of notation   the existence of a zero element  $\fzero \in F$.
More precisely, one could start with a \semifield0 $\Fd$ and then
consider the formal vector space $F := \Fd \cup \{ \fzero \}$. A
nonzero vector $v$ of $F^{(n)}$ is called \textbf{tangible} if
each of its components is in $\tT \cup \{ \fzero \}.$

\begin{definition} We define the \textbf{ghost
surpasses} relation $\lmodg$ on any
 supertropical \semifield0 $\Fd$ (resp.~on a supertropical vector space $V$), by $$b \lmodg a \qquad \text{ iff } \qquad b = a +
c \quad \text{for some   ghost element} \quad  c,$$ where $a,b,c
\in \Fd$ (resp.~$a,b,c \in V$).
\end{definition}
\noindent This relation is antisymmetric, by \cite[Lemma
1.5]{IzhakianRowen2009Equations}.
 In this notation, by writing $a \lmodg \fzero$ we mean
$a \in \tHz$.

%
%
%
%
\subsection{Matrices}
 Assume that
$A$ is a nonsingular matrix. We define the matrices $$A^\nb  =
\frac \fone {\Det{A}} \adj{A} , \qquad  A^\Bnb := A^\nb A A^\nb,$$
cf. \cite[Remark 2.14]{IzhakianRowen2009Equations}, and recall
that $I_A = A A^\nb$
and $I'_A = A^\nb A$ 
are quasi-identities, in the sense that they are multiplicatively
idempotent matrices having determinant $\fone,$ and ghost surpass
the identity matrix.

   Then the matrices    $$I_A,
\quad I'_A , \quad A^\Bnb = A^\nb A A^\nb = A^\nb I_A, \quad
\text{and } \ I_A A$$ are nonsingular,  since $I_A A A^\nb = I_A^2
= I_A$ is nonsingular.

\subsection{Bases}
In \cite{IzhakianRowen2008Matrices}, we  defined vectors in $V$ to
be \textbf{tropically independent} if no  linear combination with
tangible coefficients is in $\tHz$, and proved that a set of $n$
vectors is tropically independent iff its matrix has rank~$n$.


\begin{defn}\label{def:thick}  A
\textbf{d-base} (for {\it dependence base}) of a supertropical
vector space $V$ is a maximal  set of tropically independent
elements of $V$. Although d-bases could have different number of
elements, we define $\rank(V)$ to be the maximal possible
cardinality of a d-base.

A subspace $W$ of a supertropical vector space  $V$ is
\textbf{thick} if $\rank(W)= \rank(V).$

 An \textbf{s-base}
of $V$ (when it exists) is a minimal spanning set.

A \textbf{d,s-base} is a d-base which is an s-base.
A vector $v \notin \tHz$ in $V$ is \textbf{critical} if we cannot
write $v \lmodg v_1 + v _2$ for $v_1,v_2 \in V \setminus Fv.$
\end{defn}

In \cite[Theorem 5.24]{IzhakianKnebuschRowen2010LinearAlg} we
prove that  the s-base (if it exists) is
 unique up to multiplication by scalars.

\begin{example}\label{sb} The \textbf{standard d,s-base} for $\Fz  ^{(n)}$ is the
set of
 vectors $$\{ (\fone, \fzero, \fzero\dots, \fzero), (\fzero, \fone,
 \fzero, \dots, \fzero), \dots, (\fzero, \dots, \fzero, \fone)\}.$$
\end{example}

Given the plethora of thick subspaces, one would expect the theory
of dual spaces to be rather complicated, and one of our basic aims
in this paper is to make sense of duality.

 Bilinear forms are introduced in \cite[Section~6]{IzhakianKnebuschRowen2010LinearAlg}
 in order to treat orthogonality of vectors. We review them
  in this paper as they are needed, in \S\ref{bilform}.

\section{Supertropical linear maps   and the dual space}

In this section we introduce supertropical linear maps, and use
these to define the dual space with respect to a d,s-base $\tB$,
showing that it has the canonical
dual s-base to be given in Theorem~\ref{dualbasethm}. 
(A version of a dual space for idempotent semimodules, in the
sense of dual pairs, given in \cite{CGQ}, leads to a Hahn-Banach
type-theorem.)

\subsection{Supertropical maps}

Recall that a \textbf{linear map} $\varphi:V\to V' $ of vector
spaces over a semifield~$F$ satisfies
$$\varphi(v+w) = \varphi(v) + \varphi(w),\qquad \varphi(av) = a\varphi(v), \qquad \forall a \in R, \ v,w \in V.$$

We weaken this a bit in the supertropical theory.

\begin{defn}\label{tropmp} Given supertropical vector spaces $ V $ and
$ V' $ over a supertropical semifield~$F$, a \textbf{supertropical
map}
$$\tmap : V     \ \to \  V'$$ is a function satisfying
\begin{equation}\label{tamp0}\tmap (v+w)
\lmod \tmap (v) + \tmap (w),\qquad \tmap (\a v) = \a\tmap (v),
\qquad \forall \a \in \tT, \ v,w \in V.\end{equation}

We write $ \Homgs (V, V' )$ for the set of supertropical maps from
$V$ to $V',$ which is viewed as a vector space over $F$ in the
usual way, given by pointwise operations. We write $\tHz : = eV$
and $\tHz' : = eV'.$
 \end{defn}

\begin{lem}\label{fctl1} Any supertropical map $\tmap :V
\to V'$ satisfies
$$\tmap(a v)
\lmodg a \tmap(v)$$ for any $v\in V$ and $a \in F $. In
particular,
\begin{equation}\label{tamp1} \tmap(\tHz) \subseteq
\tHz'.\end{equation}
\end{lem}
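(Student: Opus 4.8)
The plan is to split on the type of the scalar $a$: every $a \in F$ is tangible, a nonzero ghost, or $\fzero$, and the homogeneity clause of Definition~\ref{tropmp} is available only when $a \in \tT$, so the surpassing clause $\tmap(v+w)\lmodg\tmap(v)+\tmap(w)$ has to carry the remaining two cases.

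Before the case analysis I would record three elementary facts. (i) The relation $\lmodg$ is reflexive, since $b = b + \fzero$ with $\fzero \in \tHz$. (ii) For any $x$ in a supertropical vector space, $\nu(x)\lmodg x$, because $\nu(x)$ is a ghost and $x + \nu(x) = x + x + x = \nu(x)$ by axiom~(a); together with the antisymmetry of $\lmodg$ (\cite[Lemma~1.5]{IzhakianRowen2009Equations}), this upgrades any relation $y \lmodg \nu(y)$ to the equality $y = \nu(y)$, so that $y \in \tHz$. (iii) Hence any additively idempotent vector $w$ (i.e.\ $w+w=w$, which includes $w = \fzero_V$ and every $w \in \tHz = eV$, since then $w+w = ew = w$) satisfies $\tmap(w)\in\tHz'$: indeed $\tmap(w) = \tmap(w+w) \lmodg \tmap(w)+\tmap(w) = \nu(\tmap(w))$, and (ii) applies.

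With these in hand I would dispatch the three cases. For $a \in \tT$, equation~\eqref{tamp0} gives $\tmap(av) = a\tmap(v)$, hence $\tmap(av)\lmodg a\tmap(v)$ by (i). For $a = \fzero$, both $av = \fzero_V$ and $a\tmap(v)=\fzero_{V'}$, and (iii) gives $\tmap(\fzero_V)\in\tHz'$, i.e.\ $\tmap(av)\lmodg\fzero_{V'}=a\tmap(v)$. For $a$ a nonzero ghost, surjectivity of $\nu|_\tT$ lets me write $a=\nu(\alpha)=\alpha+\alpha$ with $\alpha\in\tT$, so $av=\alpha v+\alpha v$, and the surpassing clause followed by homogeneity for the tangible scalar $\alpha$ yields
$$\tmap(av)=\tmap(\alpha v+\alpha v)\lmodg\tmap(\alpha v)+\tmap(\alpha v)=\alpha\tmap(v)+\alpha\tmap(v)=\nu\bigl(\alpha\tmap(v)\bigr)=\nu(\alpha)\,\tmap(v)=a\,\tmap(v),$$
using $x+x=\nu(x)$ and $\nu(\alpha u)=\nu(\alpha)u$ for $u\in V'$. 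This proves $\tmap(av)\lmodg a\tmap(v)$ for all $a \in F$; and the inclusion $\tmap(\tHz)\subseteq\tHz'$ is then just (iii), since every element of $\tHz$ is additively idempotent.

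The only real subtlety --- the main obstacle --- is precisely the step past tangible scalars: the map axiom provides no homogeneity for ghost or zero scalars, so one has to exploit that a ghost scalar equals $\alpha+\alpha$ and a ghost vector is additively idempotent, feed this through the surpassing clause, and then invoke antisymmetry of $\lmodg$ to upgrade ``surpasses a ghost'' to ``equals a ghost.'' Everything else is routine bookkeeping with the supertropical axioms.
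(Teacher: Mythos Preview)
Your proof is correct and follows essentially the same route as the paper: both split on whether $a$ is tangible (where \eqref{tamp0} gives equality) or ghost (where one writes $a=\alpha+\alpha$ with $\alpha\in\tT$ and feeds this through the surpassing clause). You are somewhat more careful than the paper---you treat $a=\fzero$ separately and justify explicitly why ``surpasses a ghost'' forces being ghost via antisymmetry---whereas the paper simply deduces \eqref{tamp1} by specializing to $a=e$; but these are cosmetic differences rather than a genuinely distinct strategy.
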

\begin{proof} The assertion holds by definition when $a\in \tT,$ and when $a\in \tG$
we take $\a \in T$ such that  $a = \a^ \nu = \a+\a$ and thus have
$$\tmap(a v) = \tmap((\a+\a) v) \lmodg \tmap(\a v)+\tmap(\a v) = \a\tmap( v)+\a\tmap( v) = (\a+\a)\tmap( v) = a\tmap( v).$$
The last assertion follows by taking $a = e.$\end{proof}

\begin{rem}\label{fctl2} One may wonder why we have required  $\tmap (\a v) = \a\tmap
(v)$ and not just $\tmap (\a v) \lmod \a \tmap (v)$. In fact,
these are equivalent when $\a \in \tT$, since $F$ is a
supertropical semifield. Indeed, assume that $\tmap (\a v) \lmod
\a \tmap (v)$ for any $\a  \in \tT$ and $v \in V$. Then also $\a
^{-1} \in \tT$. By hypothesis,
$$\a ^{-1}\tmap (\a v) \lmod \a ^{-1} \a \tmap (v)= \tmap(v)$$ and
$$\tmap (v) = \tmap (\a ^{-1} \a  v) \lmod \a ^{-1}\tmap (\a  v),$$ so by
antisymmetry, $\a ^{-1}\tmap (\a v) = \tmap(v),$ implying $\tmap
(\a v) = \a \tmap(v).$\end{rem}

\begin{rem} Lemma~\ref{fctl1} implies
$$\tmap(v^{\nu}) = \tmap(e v) \lmod e \tmap(v) = \tmap(v)^{\nu
};$$ i.e., $ \tmap  \circ \nu \lmod \nu \circ \tmap.$
\end{rem}

\begin{lem}\label{gsur} If $v \lmodg w$ then $\tmap(v)\lmodg
\tmap(w).$   \end{lem}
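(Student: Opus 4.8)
The plan is to unwind the definition of the ghost surpassing relation and then use that a supertropical map carries ghosts to ghosts. By definition of $\lmodg$, the hypothesis $v \lmodg w$ means $v = w + c$ for some ghost element $c \in \tHz$. Applying $\tmap$ and invoking the defining inequality \eqref{tamp0} of a supertropical map gives
$$\tmap(v) \ = \ \tmap(w+c) \ \lmodg \ \tmap(w) + \tmap(c).$$

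The key extra ingredient is Lemma~\ref{fctl1}, in particular \eqref{tamp1}: since $c \in \tHz$, we have $\tmap(c) \in \tHz'$, i.e.\ $\tmap(c)$ is a ghost element of $V'$. Unwinding the surpassing relation just obtained, $\tmap(v) = \tmap(w) + \tmap(c) + d$ for some ghost $d \in \tHz'$; as $\tHz'$ is closed under addition, $\tmap(c) + d \in \tHz'$, and therefore $\tmap(v) = \tmap(w) + (\text{ghost})$, which is precisely $\tmap(v) \lmodg \tmap(w)$.

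Equivalently, one may present the last step as an application of transitivity of $\lmodg$ (which is immediate from the fact that the ghost ideal is closed under addition) to the chain $\tmap(v) \lmodg \tmap(w) + \tmap(c) \lmodg \tmap(w)$, the second surpassing holding because $\tmap(c)$ is ghost. I do not anticipate any real obstacle: the only nontrivial input is that $\tmap$ maps $\tHz$ into $\tHz'$, which is exactly Lemma~\ref{fctl1}, and everything else is a direct bookkeeping with the definition of $\lmodg$.
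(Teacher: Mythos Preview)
Your proof is correct and follows essentially the same approach as the paper: write $v = w + c$ with $c$ ghost, use the supertropical map inequality to get $\tmap(v) \lmodg \tmap(w) + \tmap(c)$, and then use Lemma~\ref{fctl1} (that $\tmap(\tHz)\subseteq \tHz'$) together with transitivity of $\lmodg$ to conclude. The paper presents this as the single chain $\tmap(v) \lmodg \tmap(w) + \tmap(c) \lmodg \tmap(w)$; your version just unpacks the last step more explicitly.
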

\begin{proof} Write $v = w + w'$ where $w' \in \tHz.$ Then  $$\tmap(v) \lmodg \tmap(w) + \tmap(w') \lmodg \tmap(w)$$
since $\tmap(w) \in \tHz'$. \end{proof}

\begin{rem} $ \Homgs (V, V' )$ has a supertropical vector space
structure, under the natural operations
$$(\varphi_1 + \varphi_2)(v) = \varphi_1(v) + \varphi_2(v), \qquad (a\varphi)(v) = a\varphi(v),
 \qquad \nu(\varphi)(v) = \varphi(v)^\nu,$$
for $\varphi \in \Homgs (V, V' )$, $ a \in F$, $v \in V$.

The \textbf{ghost maps} are   $\{ f\in \Homgs (V, V' ): f(V)
\subseteq \tHz'\}$.\end{rem}

\begin{defn}\label{def:ghostKer}
Given a supertropical map   $\tmap : V \to V'$, we define the
\textbf{ghost  kernel} $$\Gker (\tmap) := \tmap^{\invr}(\tHz') =
\{ v\in V: \tmap(v) \in \tHz'\}.$$  We say that $\tmap$ is
\textbf{ghost monic} if $\tmap^{\invr}(\tHz') = \tHz.$
\end{defn}

\begin{rem} $\Gker (\tmap)$ is an $F$-subspace   of $V$.\end{rem}

\begin{defn}\label{def:onto}
A  supertropical  map $\tmap  : V \to W$ of vector spaces  is
called
 \textbf{tropically onto} if $\tmap(V)$ contains a thick subspace of $W$.
 An \textbf{iso} is a supertropical map that is both ghost monic and
  tropically onto. (Note that this need not be an isomorphism in the
  usual sense, since $\tmap$ need not be onto.)
\end{defn}

\begin{rem} The composition of isos is an iso.
\end{rem}



\subsection{Linear functionals}
\begin{defn} Suppose $V$ is a vector space
over a supertropical semifield $F$. The space of supertropical
maps
$$\tlV := \Homgs (V, \Fz ),$$
  is called the
\textbf{(supertropical) dual $F$-space} of $V$, and  its elements
are called \textbf{linear functionals};   i.e., any linear
functional $\lfun \in \tlV$ satisfies
$$\lfun(v_1 + v_2)  \lmod \lfun(v_1)  + \lfun(v_2) , \qquad  \lfun(\a v_1)
= \a \lfun(v_1)$$ for any $v_1, v_2 \in V$ and $\a \in \tT $.

The set $ \tHz (V^*)$ of \textbf{ghost linear functionals} is the
set of linear functionals that are ghost maps, i.e., $\{ \lfun \in
\tlV: \lfun (V) \subseteq \tGz\}$.
\end{defn}
%

%

 Our next goal is to describe the linear functionals for
thick subspaces $V$ of $\Fz  ^{(n)}$
(including the case $V=\Fz ^{(n)}$). Towards this end, we want a
definition of linear functionals that respects a given d-base $\tB
= \{b_1, \dots, b_n\}$  of~$V$. We define the \textbf{matrix}
$A(\tB)$ of $\tB$, to be the matrix whose columns are the vectors
comprising $\tB$. For the remainder of this section we set the
matrix
$$ A : = A(\tB) .$$

\begin{defn} A d-base $\tB$ is \textbf{closed} if $I_A \tB =
\tB.$ A \textbf{closed subspace}  of $V$  is a subspace having a
closed d-base.
\end{defn}

There is an easy way to get a closed d-base from an arbitrary
d-base $\tB.$

\begin{defn}\label{sdbase0} Write $\oA = I_A A$,  and let
$\otB$ denote the set of column vectors
 of $\oA.$
 Let $$V_\tB : = \{ \oA v : v \in V
\},$$   the  subspace of $V$ spanned  by $ \otB.$ \end{defn}

\begin{rem} $\oA = I_A A$ is a nonsingular matrix, implying $\otB$  is a  d-base. $\otB$ is
easier to compute  than~ $\tB,$ since now we have
 $$\oA A^\nb I_A = AA^\nb AA^\nb I_A = I_A^3 = I_A,$$ implying
$$I_{A} \oA =  I_A  I_A A =  I_A A = \oA.$$\end{rem}

\begin{lem}$V_\tB$ is a thick, closed subspace of $V$, and
  $\tB$ is a closed d,s-base of $V_\tB.$
\end{lem}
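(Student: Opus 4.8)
The plan is to verify the three assertions --- that $V_\tB$ is thick, that it is closed, and that $\otB$ is a closed d,s-base of $V_\tB$ --- in that order, leaning on the two preceding remarks. First I would note that $\oA = I_A A$ is nonsingular (it has determinant $\fone$, being the product of $I_A$, a quasi-identity with determinant $\fone$, and the nonsingular $A$), so by the cited characterization of tropical independence via rank, its column set $\otB$ is a tropically independent set of $n = \rank(V)$ vectors. Hence $\otB$ is a d-base of its span $V_\tB$, and since $V_\tB \subseteq V$ has a d-base of size $\rank(V)$, we get $\rank(V_\tB) = \rank(V)$, i.e. $V_\tB$ is thick in $V$. (Strictly I should check $V_\tB \subseteq V$: each $\oA v = I_A A v$; since $V$ is a thick subspace of $F^{(n)}$ containing the columns of $A$, it contains $Av$ for $v$ ranging suitably, but really $V_\tB$ is \emph{defined} as the span of $\otB$ and $\otB \subseteq V$ because each column of $\oA = I_A A$ is an $F$-combination of columns of $A \in V$ --- here I use that $V$ is an $F$-subspace.)

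Next, closedness of $\otB$ as a d-base: by definition I must show $I_{\oA}\,\otB = \otB$, equivalently $I_{\oA}\,\oA = \oA$ where $I_{\oA} = \oA\,\oA^\nb$. This is exactly the computation carried out in the remark just before the lemma: using $\oA\,A^\nb I_A = A A^\nb A A^\nb I_A = I_A^3 = I_A$ one gets $I_A\,\oA = I_A I_A A = I_A A = \oA$, and then $I_{\oA}\,\oA = \oA\,\oA^\nb\,\oA$, which (since $\oA$ is nonsingular) equals $\oA$ by the standard identity $A^\nb A A^\nb \cdot$(stuff) --- more directly, $\oA^\nb \oA$ is a quasi-identity, and $\oA (\oA^\nb \oA) \lmodg \oA$ together with nonsingularity and antisymmetry of $\lmodg$ forces equality. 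So $V_\tB$ is a closed subspace and $\otB$ is a closed d-base.

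It remains to see $\otB$ is also an s-base of $V_\tB$, i.e. a minimal spanning set. It spans $V_\tB$ by construction. Minimality: if some proper subset spanned $V_\tB$, then $V_\tB$ would be spanned by fewer than $n$ vectors, but a tropically independent set of size $n$ cannot lie in the span of fewer than $n$ vectors (again by the rank criterion: the spanning matrix would have rank $< n$, yet it must express a rank-$n$ independent family). Hence $\otB$ is a minimal spanning set, so it is a d,s-base, and being closed it is a closed d,s-base of $V_\tB$.

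I expect the main obstacle to be the bookkeeping around which ambient space things live in and why minimality of a spanning set follows cleanly --- in the supertropical setting "spanning" and "independent" interact less rigidly than classically (thick proper subspaces exist), so I would want to invoke precisely the rank-equals-$n$-iff-tropically-independent result and the uniqueness-up-to-scalars of s-bases from \cite[Theorem 5.24]{IzhakianKnebuschRowen2010LinearAlg} rather than argue by a naive dimension count. The matrix identities themselves are routine given the preceding remark.
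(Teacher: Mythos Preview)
Your overall approach mirrors the paper's: thickness comes from the $n$ tropically independent columns of the nonsingular matrix $\oA$, and closedness from the matrix identities in the preceding remark. The paper's proof is far terser (two sentences) and does not spell out the s-base verification you supply.

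There is, however, a gap in your closedness step. You correctly observe that the definition literally asks for $I_{\oA}\,\oA = \oA$, not merely $I_A\,\oA = \oA$, and you try to obtain the former. But the argument you give fails: from $\oA^\nb\oA$ being a quasi-identity you deduce $\oA(\oA^\nb\oA) \lmodg \oA$, which is only \emph{one} direction of ghost surpassing; antisymmetry of $\lmodg$ needs both directions before you may conclude equality. In supertropical matrix algebra $A A^\nb A$ is in general \emph{not} equal to $A$ --- that is precisely why $\oA = I_A A$ was introduced --- so there is no a priori reason to expect $\oA\,\oA^\nb\,\oA = \oA$ from this line of reasoning alone. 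The paper's proof does not attempt this stronger identity: it simply records ``$\otB$ is closed since $I_A^2 = I_A$'', i.e., it uses $I_A\,\oA = I_A^2 A = I_A A = \oA$, which is exactly the content of the preceding remark. Whether that literally matches the stated definition (which refers to $I_{A(\otB)} = I_{\oA}$) is a question you might raise with the authors, but the intended argument is clearly the $I_A^2 = I_A$ route, not the one you sketched.
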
 \begin{proof} $V_\tB$ contains $n$ independent vectors. Clearly $\otB$ is
closed since $I_A^2 =I_A$.
 \end{proof}

 Rather than
dualizing all of $V$, we turn to the space
$$\tlV_{\tB} : = \Homgs (V_\tB, \Fz ).$$

 Define $L_A \in \Homgs (V,V)$ by
$$L_A(v) := A^\Bnb v, \qquad \text{for every } v \in V.$$
We also define the map $\tilL_A: V \to V $ by
$$\tilL_A(v) :=  I_A v, \qquad \text{for every } v \in V.$$

\begin{rem}
$(\tilL_A)^2 = \tilL_A$, and $ \tilL_A$ is the identity on
$V_{\tB}$ since $$I_A (I_A Av) = I_A ^2 A v = I_A  A v . $$

Likewise, $L_A(v) = v$ for all $v \in V_\tB.$
\end{rem}

\begin{lem}If $\lfun \in \tlV_{\tB}$, then  $\lfun = (\lfun \circ
\tilL_A )\big|_{V_{\tB}}$ on $V_{\tB}$. In other words,
$$\tlV_{\tB} = \{ {(\ell \circ \tilL_A)} \big|_{\tB} :
 \ell \in \tlV\}.$$  \end{lem}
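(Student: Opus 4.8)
The plan is to exploit the fact that $\tilL_A$ restricts to the identity on $V_\tB$, which was just recorded in the preceding Remark. First I would take an arbitrary $\lfun \in \tlV_\tB$ and an arbitrary vector $v \in V_\tB$; since $\tilL_A(v) = I_A v = v$ for all $v \in V_\tB$, we get $(\lfun \circ \tilL_A)(v) = \lfun(v)$ directly, so the two functionals agree pointwise on $V_\tB$. This already gives the first sentence of the claim, essentially for free, once one is allowed to quote the Remark. The only thing to check is that $\lfun \circ \tilL_A$ is actually a legitimate supertropical map $V \to F$, i.e.\ an element of $\tlV$, so that the composition makes sense as an element of $\tlV$ before restriction: one verifies $\tilL_A \in \Homgs(V,V)$ (it is $v \mapsto I_A v$, multiplication by a fixed matrix, so it respects tangible scalars and satisfies $I_A(v+w) = I_A v + I_A w \lmodg I_A v + I_A w$ trivially, with equality even), and then that the composite of a supertropical map with a supertropical map is a supertropical map — this follows from Lemma~\ref{gsur} together with Lemma~\ref{fctl1} applied to $\lfun$.

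Second, for the displayed equality of sets I would argue both inclusions. The inclusion $\tlV_\tB \subseteq \{(\lfun \circ \tilL_A)\big|_{V_\tB} : \lfun \in \tlV\}$: given $\lfun \in \tlV_\tB$, I need to produce an element $\tilde\lfun \in \tlV = \Homgs(V, F)$ with $(\tilde\lfun \circ \tilL_A)\big|_{V_\tB} = \lfun$. The natural candidate is $\tilde\lfun := \lfun \circ \tilL_A$ itself, viewed now as a map on all of $V$ (note $\tilL_A$ maps $V$ into $V_\tB = I_A V$, so $\lfun$ can be applied), which lies in $\tlV$ by the composition remark above; then by the first part its restriction to $V_\tB$ equals $(\lfun \circ \tilL_A)\big|_{V_\tB} = \lfun$. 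The reverse inclusion: given any $\ell \in \tlV = \Homgs(V,F)$, the restriction $(\ell \circ \tilL_A)\big|_{V_\tB}$ is a supertropical map $V_\tB \to F$ because $\tilL_A$ maps $V_\tB$ into $V_\tB$ (it is the identity there) and $\ell$ is supertropical; hence it lies in $\tlV_\tB$.

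The main obstacle, such as it is, is purely bookkeeping: being careful about the distinction between $\tilL_A$ as a map $V \to V$ and its restriction/corestriction to $V_\tB$, and making sure that when we write $\lfun \circ \tilL_A$ with $\lfun \in \tlV_\tB$ we are implicitly using that $\operatorname{im}(\tilL_A) = I_A V \subseteq V_\tB$ (which holds because $I_A \oA = \oA$ from the earlier Remark, so $I_A V = I_A I_A A V \subseteq$ the span of $\otB$, i.e.\ $V_\tB$). There is no hard inequality or surpassing estimate to fight through here; the idempotence $(\tilL_A)^2 = \tilL_A$ and the identity property on $V_\tB$ do all the work, and everything reduces to the observation that postcomposing or restricting along an idempotent projection onto $V_\tB$ is the identity operation on functionals already defined on $V_\tB$.
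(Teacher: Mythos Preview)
Your proposal is correct and takes essentially the same approach as the paper: the paper's entire proof is the single sentence ``Follows at once from the remark,'' referring to the fact that $\tilL_A$ is the identity on $V_\tB$. You have simply unpacked this one-liner, carefully verifying the bookkeeping (that compositions are supertropical, that both inclusions hold) which the paper leaves implicit.
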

\begin{proof}  Follows at once from the remark.\end{proof}

\begin{lem}
$\tHz(\tlV_{\tB})=\{  f |_ {V_{\tB}} : f \in  \tHz (V^*)\}$.
\end{lem}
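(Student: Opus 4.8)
The plan is to prove the two inclusions separately, using the previous lemma that describes $\tlV_{\tB}$ as the set of restrictions $(\ell \circ \tilL_A)\big|_{V_\tB}$ for $\ell \in \tlV$, together with the fact that $\tilL_A$ acts as the identity on $V_\tB$.

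First I would take $f \in \tHz(V^*)$, so $f$ is a linear functional on $V$ with $f(V) \subseteq \tGz$. Its restriction $f|_{V_\tB}$ is certainly an element of $\tlV_\tB = \Homgs(V_\tB, \Fz)$, being the restriction of a supertropical map to a subspace; and since $f(V_\tB) \subseteq f(V) \subseteq \tGz$, the restriction is a ghost linear functional on $V_\tB$, i.e. lies in $\tHz(\tlV_\tB)$. This gives the inclusion $\{f|_{V_\tB} : f \in \tHz(V^*)\} \subseteq \tHz(\tlV_\tB)$.

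Next, for the reverse inclusion, I would take $\lfun \in \tHz(\tlV_\tB)$, so $\lfun : V_\tB \to \Fz$ with $\lfun(V_\tB) \subseteq \tGz$. By the preceding lemma, $\lfun = (\ell \circ \tilL_A)\big|_{V_\tB}$ for some $\ell \in \tlV$. The natural candidate for the required $f \in \tHz(V^*)$ is $f := \ell \circ \tilL_A = \ell \circ \tilL_A$ viewed as a supertropical map on all of $V$. It is a supertropical map $V \to \Fz$ as a composite of the supertropical map $\tilL_A$ with $\ell$, hence lies in $V^*$; and its restriction to $V_\tB$ is exactly $\lfun$ by construction. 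The only thing to check is that $f$ is a \emph{ghost} functional, i.e. $f(V) \subseteq \tGz$. This is where the main obstacle lies: we only know $\lfun$ is ghost on $V_\tB$, not that $\ell \circ \tilL_A$ is ghost on all of $V$. But here one uses that $\tilL_A(v) = I_A v \in V_\tB$ for every $v \in V$ — indeed $I_A v = I_A(I_A A^\nb \cdots)$, more simply $\tilL_A(V) \subseteq V_\tB$ because $\tilL_A$ is (a version of) the projection onto $V_\tB$ — so $f(v) = \ell(\tilL_A(v)) = \lfun(\tilL_A(v)) \in \lfun(V_\tB) \subseteq \tGz$. Hence $f \in \tHz(V^*)$ and $f|_{V_\tB} = \lfun$, completing the inclusion $\tHz(\tlV_\tB) \subseteq \{f|_{V_\tB} : f \in \tHz(V^*)\}$.

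Combining the two inclusions yields the claimed equality. The subtle point worth spelling out carefully is exactly the containment $\tilL_A(V) = I_A V \subseteq V_\tB$: since $V_\tB = \{I_A A v : v \in V\}$ and $I_A V$ need not literally have that form, one should note instead that $\tilL_A$ restricted to the thick closed subspace $V_\tB$ is the identity and that its image is contained in $V_\tB$ because $I_A (I_A v) = I_A v$ shows every vector in the image is fixed by $\tilL_A$, hence lies in the fixed subspace $V_\tB$. That observation is the crux; everything else is a direct unwinding of definitions.
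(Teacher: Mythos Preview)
Your approach is essentially the paper's: the paper simply sets $f = f' \circ \tilL_A$ (in your notation, $\lfun \circ \tilL_A$), asserts this lies in $\tHz(V^*)$, and notes $f|_{V_\tB} = f'$ because $\tilL_A$ is the identity on $V_\tB$; the reverse inclusion it calls ``obvious.'' Your detour through the preceding lemma to manufacture an auxiliary $\ell \in \tlV$ is unnecessary --- once you compute $f(v) = \lfun(\tilL_A(v))$ you have recovered exactly the paper's construction.

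One caveat: your closing justification that $\tilL_A(V) \subseteq V_\tB$ because ``every vector in the image is fixed by $\tilL_A$, hence lies in the fixed subspace $V_\tB$'' is a non sequitur as written, since $V_\tB$ is \emph{defined} as $\{I_A A\, v : v \in V\}$, not as the set of $\tilL_A$-fixed vectors in $V$; you would need to argue separately that these coincide. The paper, for its part, does not address this point at all --- it simply writes $f' \circ \tilL_A \in \tHz(V^*)$ without comment.
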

\begin{proof} Suppose $f' \in \tHz(\tlV_{\tB}).$ Let $f = f'\circ \tilL_A  \in \tHz (V^*).$ Then
$f' = f |_ {V_{\tB}}.$ The other inclusion is obvious.
\end{proof}

 \begin{defn}
Given a  closed  d-base $\tB= \{ b_1, \dots, b_n\}$ of $V,$ define
 $ \ep_i: V_{\tB} \to \Fz $ by
 $$ \ep_i(v)    = {b_i} ^\trn  L_A( v)=b_i^\trn A^\Bnb v,$$
the scalar product of $b_i$ and $A^{\Bnb}v.$ Also, define $\tB^* =
\{\ep_i :  i =1, \dots,  n\} $. \end{defn}

When $v$ is tangible, we saw in \cite[
Remark~4.19]{IzhakianKnebuschRowen2010LinearAlg} that $$v \ \hsim
\ \sum _{i=1}^n  \Inu{\ep_i(v)}b_i $$ is a saturated tropical
dependence relation of $v$ on the $b_i$'s; this is the motivation
behind our definition.

\begin{rem}\label{rmk:linearFun}

  $ \ep_i$ is a linear functional. Also, by definition,
  $\ep_i(b_j)$ is the $(i,j)$ position of $A A^\nb = I_A,$ a
  quasi-identity, which implies
  $$\ep_i(b_i) = \fone; \qquad \ep_i(b_j)\in \tGz ,\ \forall i\ne j.$$
  Hence, $$\sum _{i=1}^n \a _i \ep_i (b_j) \lmodg \a _j \ep_j (b_j)
  = \a _j.$$

%
  \end{rem}

\begin{thm}\label{dualbasethm} If $F$ is a supertropical semifield and $\tB$ is a closed d-base of $V$,
then $\{\ep_i : i = 1, \dots,  n\}$ is a closed d,s-base of
$\tlV_{\tB}$.
\end{thm}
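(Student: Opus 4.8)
The plan is to verify the three required properties in turn: that $\tB^* = \{\ep_i\}$ spans $\tlV_{\tB}$, that it is tropically independent (hence a d-base), and that it is closed. For the spanning part, I would take an arbitrary $\lfun \in \tlV_{\tB}$ and show $\lfun \lmodg \sum_i \lfun(b_i)\ep_i$, and in fact equality on $V_\tB$. The natural move is to evaluate both sides at an arbitrary $v \in V_\tB$: since $\tB$ is a d,s-base of $V_\tB$ (by the lemma preceding the theorem), $v$ has a tropical dependence expansion, and by the cited Remark~4.19 of \cite{IzhakianKnebuschRowen2010LinearAlg} one has $v \hsim \sum_i \Inu{\ep_i(v)} b_i$. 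Applying $\lfun$ and using the defining surpassing relation $\lfun(v+w)\lmodg \lfun(v)+\lfun(w)$ together with Lemma~\ref{gsur}, I get $\lfun(v) \lmodg \sum_i \ep_i(v)\lfun(b_i) = \left(\sum_i \lfun(b_i)\ep_i\right)(v)$; the reverse surpassing should follow from the closedness of $\tB$ (so that $L_A$ and $\tilL_A$ act as the identity on $V_\tB$) together with the antisymmetry of $\lmodg$. This identifies $\lfun$ on $V_\tB$ with an explicit tangible combination of the $\ep_i$, giving the spanning statement; minimality of the spanning set (the s-base property) then needs the observation that no $\ep_i$ is redundant, which I would extract from Remark~\ref{rmk:linearFun}: $\ep_i(b_i)=\fone$ while $\ep_j(b_i)\in\tGz$ for $j\ne i$, so dropping $\ep_i$ loses the ability to hit $b_i$ tangibly.

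For tropical independence, suppose $\sum_{i} \a_i \ep_i$ is a ghost functional, i.e.\ lies in $\tHz(\tlV_{\tB})$, with $\a_i\in\tT\cup\{\fzero\}$ not all zero. Evaluate at $b_j$: by Remark~\ref{rmk:linearFun}, $\sum_i \a_i\ep_i(b_j) \lmodg \a_j \ep_j(b_j) = \a_j$, so $\a_j$ is weakly dominated by a ghost, forcing $\a_j\in\tGz$, contradicting tangibility unless $\a_j=\fzero$. Running this over all $j$ shows all $\a_j=\fzero$, so $\{\ep_i\}$ is tropically independent; being a spanning tropically independent set of the right cardinality $n$, it is a d,s-base of $\tlV_{\tB}$, and $\rank(\tlV_{\tB})=n$.

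For closedness, I need $I_{A(\tB^*)}\tB^* = \tB^*$ where $A(\tB^*)$ is the matrix whose columns are the $\ep_i$ in the standard coordinates of $\tlV_{\tB}$ — but here one must be careful about what "coordinates" means for functionals. The cleanest route is to compute the matrix of $\tB^*$ with respect to the dual setup directly: the $(i,j)$ entry of $A(\tB^*)$, read off from $\ep_i(b_j)$, is the $(i,j)$ entry of $I_A = A A^\nb$, so $A(\tB^*) = I_A$ (after the appropriate identification of $V_\tB$ with its image). Then $I_{A(\tB^*)} = A(\tB^*) A(\tB^*)^\nb = I_A (I_A)^\nb$, and since $I_A$ is a quasi-identity with $I_A^2 = I_A$ and determinant $\fone$, a short computation with $\adj$ gives $I_A(I_A)^\nb = I_A$, hence $I_{A(\tB^*)}\tB^* = I_A \tB^* = \tB^*$, where the last equality is the closedness of $\tB$ itself transported through the identification.

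I expect the main obstacle to be the bookkeeping in the last step: pinning down precisely which matrix plays the role of $A(\tB^*)$, since $\tlV_{\tB}$ is not literally $\Fz^{(n)}$ and the pairing $\ep_i(b_j)$ has to be interpreted as supplying the coordinates of $\ep_i$ against the "evaluation coordinates" coming from $\tB$. Once the identification $\ep_i \leftrightarrow$ ($i$-th row of $I_A$, or $i$-th column of $I_A^\trn$) is set up correctly, closedness reduces to the identity $I_A (I_A)^\nb = I_A$, which is routine from the quasi-identity properties recalled in the Matrices subsection. The independence and spanning arguments, by contrast, are straightforward consequences of Remark~\ref{rmk:linearFun} and the cited saturated dependence relation.
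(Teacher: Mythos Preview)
Your spanning argument parallels the paper's and is fine; the paper likewise just invokes Remark~\ref{rmk:linearFun} to obtain $\sum_i \lfun(b_i)\,\ep_i \lmodg \lfun$ on $V_\tB$. Your closedness discussion goes beyond what the paper's proof actually writes out.

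The gap is in your independence argument. From the hypothesis that $\sum_i \a_i \ep_i(b_j)$ is ghost, together with $\sum_i \a_i \ep_i(b_j) \lmodg \a_j$, you conclude that ``$\a_j$ is weakly dominated by a ghost, forcing $\a_j \in \tGz$.'' This inference fails: a ghost $g$ can satisfy $g \lmodg \a_j$ with $\a_j$ tangible whenever $g \nuge \a_j$. Concretely, $\sum_i \a_i \ep_i(b_j) = \a_j + \sum_{i\ne j} \a_i (I_A)_{ij}$, and the ghost off-diagonal sum $\sum_{i\ne j}\a_i (I_A)_{ij}$ may simply $\nu$-dominate the tangible $\a_j$; nothing at the single index~$j$ forces $\a_j = \fzero$. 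The off-diagonal ghost entries of a quasi-identity $I_A$ need not be $\nu$-small relative to $\fone$, so this is not a vacuous case.

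The paper handles independence by a global rank argument instead. Setting $D = \diag(\bt_1,\dots,\bt_n)$ and $\mathcal I = \{i:\bt_i\ne\fzero\}$ of size $k$, the ghost assumption makes the sum of the $\mathcal I$-indexed rows of $D I_A$ ghost, so those $k$ rows are tropically dependent and $D I_A$ has rank $\le k-1$; but the $k\times k$ principal submatrix on $\mathcal I$ has tangible determinant $\prod_{i\in\mathcal I}\bt_i$, forcing rank $\ge k$ by \cite[Theorem~3.4]{IzhakianRowen2009TropicalRank}, a contradiction. Your per-coordinate evaluation yields only the system of inequalities $\sum_{i\ne j}\a_i (I_A)_{ij} \nuge \a_j$ for each $j$; turning that into a contradiction requires precisely such a determinant/rank argument, which your proposal does not supply.
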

\begin{proof} For any $\lfun \in V_\tB^*$, we write $\a _ i = \lfun(b_i) $,   and then see from Remark
\ref{rmk:linearFun} that $\sum_{i = 1}  ^{n} \a_i \ep_i  \lmodg
\lfun $ on $V_{\tB}$.

It remains to show that the $\{\ep_i : i = 1, \dots, n\}$ are
tropically independent.
 If $\sum_{i=1}^n \bt_i \ep_i$ were ghost for some $\bt_i \in \tTz$,
  we would have $\sum_{i = 1}  ^{n} \bt_i  {b_i}^\trn A^\Bnb$ ghost.
Let $D$ denote the diagonal matrix  $\{\bt_1, \dots, \bt_n\},$
  and let  $\mathcal I = \{ i: \bt_i \ne \fzero \},$ and assume
  there are $k$ such tangible coefficients $ \bt_i$. Then  for any
  $i\notin \mathcal I$ we have $\bt_i = \fzero $, implying the $i$ row
 of the matrix  $D I_A$ is zero. But
  the sum of the rows of the matrix~$D I_A$ corresponding to indices from
  $\mathcal I$
 would be $\sum_{i = 1}  ^{n} \bt_i  {b_i}^\trn   A^\Bnb$, which is ghost, implying that these $k$ rows of  $D I_A$
are dependent; hence  $D I_A$ has rank $\le k-1$. On the other
hand, the $k$ rows of  $D I_A$ corresponding to indices from
$\mathcal I$ yield a $k \times k$ submatrix of determinant $\prod
_{i \in \mathcal I} \bt_i \in \tT,$ implying its rank $\ge k$ by
\cite[Theorem 3.4]{IzhakianRowen2009TropicalRank}, a
contradiction.
   \end{proof}

  In the view of the theorem, we denote $\tB^* = \{\ep_i :  i =1,\dots ,  n\}$, and call it the
  (tropical)
\textbf{dual d,s-base} of $V_{\tB}$.
%
%
%
%
%
 Write $V^{**}_\tB$ for $(\tlV_\tB)^*$. Define a map $$\Phi: V_\tB \to
V^{**}_\tB,$$  given by  $v \mapsto f_v$, where
$$f_v(\lfun) \ = \ \lfun(v).$$

\begin{example}\label{6.23} 
The map $\Phi: \Fz  ^{(n)}\to {\Fz  ^{(n)}}^{**}$ is a vector
space isomorphism when $\base$ is the standard base
(cf.~Example~\ref{sb}).
\end{example}

\begin{rem} 
Since $A A^\nb = I_A$ is a quasi-identity matrix, we see that
$$f_{b_j}(\ep_i) = \ep_i(b_j) = {b_i}^\trn A^\Bnb \, b_j$$
\end{rem}

\begin{thm}\label{dualiso} Suppose $V $ is a thick closed subspace of
$F^{(n)},$ with a d,s-base of tangible vectors. For any $v\in V,$
define $v^{**}\in V^{**}$ by $v^{**}(\ell) = \ell(v).$ The map
$\Phi: V\to V^{**} $ given by $v \mapsto v^{**}$ is an iso of
supertropical vector spaces.
\end{thm}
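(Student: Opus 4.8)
The plan is to prove that $\Phi\colon V\to V^{**}$ is an iso, i.e., both ghost monic and tropically onto, by working with a tangible d,s-base $\tB=\{b_1,\dots,b_n\}$ of $V$ and its dual d,s-base $\tB^*=\{\ep_1,\dots,\ep_n\}$ of $V^*=\tlV_\tB$ furnished by Theorem~\ref{dualbasethm}. Since $\tB^*$ is itself a closed d,s-base of $V^*$, Theorem~\ref{dualbasethm} applies again to produce a dual d,s-base $\tB^{**}=\{\ep_i^*\}$ of $V^{**}$. The strategy is to show that $\Phi$ carries $b_j$ to something that plays the role of $\ep_j^*$, up to the usual quasi-identity fudge, and then leverage the fact that an iso is detected on d,s-bases.

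First I would compute $\Phi(b_j)=b_j^{**}$ on the dual base: $b_j^{**}(\ep_i)=\ep_i(b_j)$, which by Remark~\ref{rmk:linearFun} equals $\fone$ when $i=j$ and lies in $\tGz$ when $i\ne j$ (the $(i,j)$ entry of the quasi-identity $I_A=AA^\nb$). This is exactly the defining incidence relation that makes $\{b_j^{**}\}$ behave like the dual base to $\tB^*$; more precisely, for any $g\in V^{**}$, writing $\gamma_i=g(\ep_i)$ we get $\sum_i \gamma_i\, b_i^{**}\lmodg g$ on $V^*_\tB$ by the same Remark~\ref{rmk:linearFun} argument used in the proof of Theorem~\ref{dualbasethm}, since $V^*$ is closed with d,s-base $\tB^*$. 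Hence $\Phi(V)\supseteq \{\sum_i\gamma_i b_i^{**}\}$ spans (up to $\lmodg$) a thick subspace of $V^{**}$; combined with $V^{**}$ having rank $n$ (Theorem~\ref{dualbasethm} again), this gives that $\Phi$ is tropically onto.

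For ghost monic, suppose $v\in V$ with $\Phi(v)=v^{**}\in\tHz(V^{**})$, i.e., $v^{**}(\ep_i)=\ep_i(v)\in\tGz$ for all $i$. Since $\tB$ is a d,s-base, write $v\lmodg\sum_i \a_i b_i$ with $\a_i=\ep_i$-coordinates; the point is that $\ep_i(v)\nucong \a_i$ for tangible $\a_i$ (again by Remark~\ref{rmk:linearFun}, since the cross terms $\a_j\ep_i(b_j)$, $j\ne i$, are ghost and cannot dominate the matched diagonal term when things are arranged via the closed base — this is where the saturated dependence relation $v\ \hsim\ \sum_i\Inu{\ep_i(v)}b_i$ cited after the definition of $\tB^*$ does the work). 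Thus $\ep_i(v)\in\tGz$ forces every $\a_i$ to be ghost or zero, whence $v\in\tHz$. Therefore $\Gker(\Phi)=\tHz$ and $\Phi$ is ghost monic, completing the proof that $\Phi$ is an iso.

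The main obstacle I anticipate is the ghost-monic step: controlling the interaction between the ``large ghost'' cross terms $\ep_i(b_j)$ for $i\ne j$ and the tangible diagonal values. One must be careful that $\ep_i(v)$ genuinely recovers the $i$th tangible coordinate of $v$ up to $\nu$-equivalence rather than being swamped by a dominating ghost coming from the other coordinates; this is precisely why the hypothesis requires $V$ to be \emph{closed} (so $L_A$ and $I_A$ act as identities on $V_\tB=V$) and to have a \emph{tangible} d,s-base (so the coordinates $\a_i$ are tangible and the saturated relation from \cite[Remark~4.19]{IzhakianKnebuschRowen2010LinearAlg} is available). Once that coordinate-recovery is nailed down, both halves follow from the bookkeeping already established for $\tB$ and $\tB^*$.
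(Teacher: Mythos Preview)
Your overall architecture matches the paper's: invoke Theorem~\ref{dualbasethm} to get the dual d,s-base $\tB^*$, then again for $\tB^{**}$, and verify separately that $\Phi$ is tropically onto and ghost monic. Your ``onto'' argument is essentially the paper's, though the paper phrases it slightly differently (it says ``$\Phi(\tB)$ is a d-base of $n$ elements'' and then appeals to Example~\ref{6.23} to pin down $\rank(V^{**})=n$, rather than arguing that the $b_j^{**}$ span up to $\lmodg$).

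The real divergence, and the real problem, is in the ghost-monic step. The paper does \emph{not} try to recover the coordinates $\a_i$ from the values $\ep_i(v)$; instead it invokes \cite[Theorem~3.4]{IzhakianKnebuschRowen2010LinearAlg} (the g-annihilator of a nonsingular matrix contains no tangible vectors). Your direct argument has a genuine gap at exactly the point you flag as ``the main obstacle'': from $v=\sum_j\a_j b_j$ one gets
\[
\ep_i(v)\;=\;\a_i\cdot\ep_i(b_i)+\sum_{j\ne i}\a_j\,\ep_i(b_j)\;=\;\a_i+\text{(ghost)},
\]
and nothing you have written prevents the ghost cross-term $\sum_{j\ne i}\a_j\,\ep_i(b_j)$ from $\nu$-dominating $\a_i$. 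In that case $\ep_i(v)\in\tGz$ while $\a_i\in\tT$, so your implication ``$\ep_i(v)\in\tGz$ for all $i$ $\Rightarrow$ every $\a_i$ ghost'' fails. The appeal to the saturated relation from \cite[Remark~4.19]{IzhakianKnebuschRowen2010LinearAlg} does not close this: that remark is stated only for tangible $v$, and even then it gives $v\,\hsim\,\sum_i\Inu{\ep_i(v)}b_i$, which does not by itself force $\ep_i(v)\nucong\a_i$ for an \emph{arbitrary} expression $v=\sum\a_j b_j$. You are also only using $\ell=\ep_i$, whereas $v^{**}\in\tHz(V^{**})$ gives you $\ell(v)\in\tGz$ for \emph{every} $\ell\in V^*$; throwing away that information is what makes your route hard.

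A clean repair, closer in spirit to the paper's citation, is to test $v^{**}$ against the coordinate projections $\pi_k\colon F^{(n)}\to F$ restricted to $V$ (these are genuine linear functionals, hence lie in $V^*$): then $v_k=\pi_k(v)=v^{**}(\pi_k)\in\tGz$ for every $k$, so $v\in\tHz$ immediately. This is effectively what the nonsingular-matrix/g-annihilator argument encodes, and it sidesteps the cross-term issue entirely.
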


\begin{proof} 
Applying Theorem~\ref{dualbasethm} twice, we see that $\Phi(\tB)$ is a d-base of $n$
elements.
 $\Phi$ is ghost monic, since
$\Gker \Phi$ cannot contain tangible vectors, in view of
\cite[Theorem 3.4]{IzhakianKnebuschRowen2010LinearAlg} (which says
that the g-annihilator of a nonsingular matrix cannot be
tangible).
%
But by Example~\ref{6.23}, taking the standard classical base, we
see that $V^{**}$ has  rank $n$, and thus is thick in $F^{(n)}$.
\end{proof}

 \section{Supertropical bilinear forms}\label{bilform}

Linear functionals and dual spaces   cast more light on the
supertropical theory of bilinear forms. Here is a more concise
version of
\cite[Definition~6.1]{IzhakianKnebuschRowen2010LinearAlg}.
Throughout, $F$ denotes a supertropical \semifield0 (although we
permit the possibility that $\zero \in F$).

\begin{defn} \label{12} A (supertropical) \textbf{bilinear form} on supertropical vector spaces $V$
and $V' $ is a function $B : V\times V' \to \Fz$ that is a linear
functional in each variable.
\end{defn}

We write $\bil v{v'} $ for $B(v,v').$ Specifically, given $w \in
V',$ we can define the functional $\tlw : V \to F$ by $\tlw (v) =
\bil vw.$ (Similarly we define $\tlv :V \to F$ for $v \in V.$)

\begin{example} There is a natural bilinear form $B:V\times V^* \to
\Fz$, given by $\bil v f = f(v)$, for $v\in V$ and $f\in
V^*$.\end{example}

 \begin{rem}\label{natmap} $ $
\begin{enumerate} \eroman
    \item Notation as in Definition~\ref{12}, any bilinear form  induces a natural map $\Phi: V' \to  V^*, $ given by $w\mapsto
\tlw$. Likewise, there is a natural map $\Phi: V \to{V'}^*, $
given by $v\mapsto \tlv$.

 \pSkip

    \item  For any bilinear form $B$, if $v \lmod \sum_i \a _i v_i$ and $w
    \lmod
\sum_j \bt _j w_j$, for $\a_i, \bt_j \in F    ,$ then
\begin{equation}\label{0.4} \bil vw \lmodg  \sum _{i,j }  \alpha_i \bt_j\bil {v_i}{w_j} .\end{equation}
\end{enumerate}
\end{rem}

\begin{defn}
 When $V' = V$, we say that
$B$ is a \textbf{(supertropical) bilinear form} on the vector
space~$V.$  The space~$V$  is \textbf{nondegenerate} (with respect
to $B$)  if $\bil {v}V\not \subseteq \tG, \ \forall v \in V.$
\end{defn}

Although this definition suffices to carry through much of the
theory, we might want to compute the bilinear form $B$ in terms of
its values on an s-base of $V$. To permit this, we tighten the
definition a bit.

\begin{defn}
We say that a bilinear form $B$ is \textbf{strict} if
$$
\bil {\a_1v_1 + \a_2 v_2}{\bt_1w_1 + \bt_2 w_2}   = \a_1\bt_1 \bil
{v_1}{w_1} + \a_1\bt_2 \bil {v_1}{w_2} +    \a_2\bt_1 \bil
{v_2}{w_1} +\a_2\bt_2 \bil {v_2}{w_2} ,  $$ for $v_i \in V$ and
$w_i \in V'$.
\end{defn}

\begin{defn} The \textbf{Gram matrix} of the bilinear form with respect to vectors $v_1, \dots,
v_k \in V = \Fz^{(n)}$ is defined as the $k \times k$  matrix
\begin{equation}\label{eq:GramMatrix}
\tilG(v_1, \dots, v_k ) = \left( \begin{array}{cccc}
                      \bil {v_1}{v_1} &  \bil {v_1}{v_2} & \cdots & \bil {v_1}{v_k} \\
                      \bil {v_2}{v_1} &  \bil {v_2}{v_2} & \cdots & \bil {v_2}{v_k} \\
                         \vdots & \vdots &  \ddots & \vdots \\
                      \bil {v_k}{v_1} &  \bil {v_k}{v_2} & \cdots & \bil {v_k}{v_k} \\
                        \end{array} \right).
\end{equation}
\end{defn}


%
%
%
%
%
%
%
%
%

\begin{defn}  We write
 $v \gperp w$
when $\bil{v}{w} \in \tGz$,  that is $\langle v,w\rangle\lmodg
\fzero$. In this case, we say that $v$  is \textbf{left ghost
orthogonal} to $w$, or \textbf{left g-orthogonal} for short. Likewise,
a subspace $W_1$ is  \textbf{left g-orthogonal} to $W_2$ if
$\langle w_1,w_2\rangle\in \tGz$ for all $w_i\in W_i.$



A subset $S$ of $V$ is \textbf{g-orthogonal} (with respect to a
given bilinear form) if any pair of distinct vectors from $S$ is
g-orthogonal.
\end{defn}

In this paper we usually require $\gperp $  to be a symmetric
relation. This was studied in greater detail in~
\cite[Definition~6.12]{IzhakianKnebuschRowen2010LinearAlg}, but we
take the simpler definition here since we focus on strict bilinear
forms, for which the two notions coincide in view of
\cite[Lemma~6.15]{IzhakianKnebuschRowen2010LinearAlg}).

\subsection{Isotropic vectors}

\begin{defn} A vector $v \in V$ is \textbf{g-isotropic}  if $\bil v v \in \tGz$; $v$ is  \textbf{g-nonisotropic}   if $\bil v v \in \tT.$
A subset $ S\subset V$ is \textbf\textbf{g-nonisotropic}   if each
vector of $S$ is g-isotropic. \end{defn}

%

For any supertropical \semifield0 $F$ and  $k \in \Net$, we have
the sub-\semifield0
$$F^k = \{ a^k : a \in F\}.$$ For
example, when $F$ is the supertropical \semifield0 built from  the
ordered group $(\R,\cdot)$, then $F^2 \ne F,$ since we only get
the positive elements. However, when $F$ is the supertropical
\semifield0 built from the ordered group $(\R,+)$, or from $(\R
^+,\cdot)$, then $F^2 = F.$ 
\begin{defn}
A vector $v \in V$ is called   \textbf{normal} if $\bil vv =\fone.
$
\end{defn}

\begin{rem}\label{normal1}  Suppose $F^2 = F.$
If  $\bil vv  =  a \in \tT$, then $\bil {\frac v{\sqrt a}}{\frac
v{\sqrt a}}= \fone,$ so $\frac {v}{\sqrt a}$ is normal. Thus, in
this case, any g-nonisotropic vector has a scalar multiple that is
normal.
\end{rem}


The bilinear form $B$ is \textbf{supertropically alternate} if
each vector is g-isotropic; i.e., $\bil vv \in \tGz$ for all $v\in
V.$
 $B$ is \textbf{supertropically symmetric}  if $\bil vw + \bil
wv\in \tGz$ for all $v,w \in V.$ A special case: $B$ is
\textbf{symmetric} if $\bil vw = \bil wv$ for all $v,w \in V.$

\begin{lem}\label{18111} If $B$ is  supertropically symmetric on the vector
space $Fv_1+Fv_2$ and    $v_1$ and $v_2$ are both g-isotropic,
then $B$ is supertropically alternate.\end{lem}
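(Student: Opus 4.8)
The plan is to take an arbitrary vector $v = \a_1 v_1 + \a_2 v_2 \in Fv_1 + Fv_2$ (with $\a_1, \a_2 \in F$) and show $\bil vv \in \tGz$. First I would expand, using bilinearity in each variable (Remark~\ref{natmap}(ii)), to get
$$\bil vv \lmodg \a_1^2 \bil {v_1}{v_1} + \a_2^2 \bil {v_2}{v_2} + \a_1 \a_2 \bil {v_1}{v_2} + \a_1 \a_2 \bil {v_2}{v_1}.$$
By hypothesis $\bil {v_1}{v_1}$ and $\bil {v_2}{v_2}$ are ghost, so the first two terms are ghost. By supertropical symmetry, $\bil {v_1}{v_2} + \bil {v_2}{v_1}$ is ghost, hence the $\a_1\a_2$-multiple of it is ghost as well (the ghost ideal is closed under scaling). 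Thus the entire right-hand side lies in $\tGz$. Since $\bil vv$ ghost-surpasses an element of $\tGz$, and $\tGz$ is an $F$-subspace closed under the surpassing relation (anything surpassing a ghost is itself ghost, by property (a) together with the definition of $\lmodg$), we conclude $\bil vv \in \tGz$.

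A small technical point to address: the displayed expansion above uses $\bil vv \lmodg (\text{sum of terms})$ rather than equality, because $B$ is not assumed strict here — only supertropically symmetric on $Fv_1 + Fv_2$. So I would invoke \eqref{0.4} from Remark~\ref{natmap}(ii), applied with $v = w = \a_1 v_1 + \a_2 v_2$, to legitimately pass to the four-term ghost-surpassing inequality. The only other thing worth spelling out is that $\bil vv \lmodg g$ with $g \in \tGz$ forces $\bil vv \in \tGz$: writing $\bil vv = g + h$ with $h \in \tGz$ gives $\bil vv \in \tGz$ since $\tGz$ is an ideal (closed under addition). Alternatively one can note $v \gperp v$ directly from the definition of $\gperp$ once we know $\bil vv \lmodg \fzero$.

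I do not anticipate a serious obstacle; this is essentially a one-line computation once the bookkeeping with $\lmodg$ versus $=$ is handled. The mildest subtlety is making sure the cross terms are treated correctly: because we are not assuming the form is symmetric (only \emph{supertropically} symmetric), we must keep both $\bil {v_1}{v_2}$ and $\bil {v_2}{v_1}$ and combine them via the supertropical symmetry hypothesis rather than merging them prematurely. That is the step I would state most carefully.
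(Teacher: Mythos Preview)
Your proposal is correct and follows essentially the same approach as the paper's proof: expand $\bil{\a_1 v_1 + \a_2 v_2}{\a_1 v_1 + \a_2 v_2}$ into the four terms and observe that each lies in $\tGz$. The paper's one-line proof actually writes an equality rather than $\lmodg$ in the expansion, so your version---which carefully tracks the ghost-surpasses relation via Remark~\ref{natmap}(ii) and then argues that anything ghost-surpassing a ghost is itself ghost---is if anything more scrupulous than the original.
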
  \begin{proof}
 $\bil {\gm _1 v_1 + \gm _2 v_2}{\gm _1  v_1 + \gm _2
v_2} = \gm _1^2 \bil{ v_1}{ v_1} + \gm _1 \gm _2 (\bil{ v_1}{
v_2}+ \bil{ v_2}{ v_1} ) + \gm _2 ^2 \bil{ v_2}{ v_2}\in \tGz.$
\end{proof}
\begin{prop}\label{18112} If $B$ is  supertropically symmetric on a vector space with an s-base of g-isotropic
vectors,  then $B$ is supertropically alternate.\end{prop}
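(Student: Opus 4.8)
The plan is to reduce Proposition~\ref{18112} to Lemma~\ref{18111} by a spanning argument, using the strictness/expansion machinery of Remark~\ref{natmap}(ii) rather than bilinearity on the nose. Let $\{b_1,\dots,b_n\}$ be the s-base of $V$ consisting of g-isotropic vectors, so $\bil{b_i}{b_i}\in\tGz$ for every $i$, and since $B$ is supertropically symmetric, $\bil{b_i}{b_j}+\bil{b_j}{b_i}\in\tGz$ for all $i,j$. I want to show $\bil vv\in\tGz$ for an arbitrary $v\in V$. Because the $b_i$ span $V$, write $v\lmodg\sum_i\a_i b_i$ with $\a_i\in F$ (the s-base spans, so in fact we may take equality, but $\lmodg$ is all that is needed). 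Applying~\eqref{0.4} in both variables gives
$$\bil vv\lmodg\sum_{i,j}\a_i\a_j\bil{b_i}{b_j}.$$

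Now group the right-hand sum: the diagonal terms contribute $\sum_i\a_i^2\bil{b_i}{b_i}$, each summand of which is ghost since $\bil{b_i}{b_i}\in\tGz$ and $\tGz$ is an ideal; the off-diagonal terms pair up as $\a_i\a_j(\bil{b_i}{b_j}+\bil{b_j}{b_i})$ for $i<j$, each of which is ghost by supertropical symmetry of $B$ together with the ideal property of $\tGz$. Hence the entire right-hand side lies in $\tGz$, and therefore $\bil vv\lmodg c$ for some $c\in\tGz$; by definition of $\lmodg$ this means $\bil vv\in\tGz$, i.e. $v$ is g-isotropic. Since $v$ was arbitrary, $B$ is supertropically alternate.

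The one point that needs care—and is the main obstacle—is the passage from bilinearity "up to $\lmodg$" to the conclusion: the expansion in~\eqref{0.4} is only a ghost-surpassing relation, not an equality, so I cannot simply invoke Lemma~\ref{18111} verbatim for the two-variable case with general coefficients. The fix is exactly the observation above: $\lmodg$ is all one needs, because we are only trying to land inside the ghost ideal, which is closed under addition and under multiplication by arbitrary scalars; so adding a further ghost term (coming from the slack in~\eqref{0.4}) and regrouping does no harm. If one prefers, one can instead cite Lemma~\ref{18111} applied to each plane $Fb_i+Fb_j$ to get $\bil{b_i}{b_j}+\bil{b_j}{b_i}\in\tGz$ and $\bil{b_i}{b_i},\bil{b_j}{b_j}\in\tGz$, and then run the same grouping; either way the substantive content is that $\tGz$ absorbs everything in sight, which is what makes the supertropical statement go through.
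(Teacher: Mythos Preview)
Your proof is correct and is essentially the same argument as the paper's, which reads in full ``Apply induction to the lemma.'' The paper's inductive packaging (write $v = w + \a_n b_n$ with $w$ in the span of $b_1,\dots,b_{n-1}$, use the inductive hypothesis to see $w$ is g-isotropic, then invoke Lemma~\ref{18111}) unrolls to exactly your direct expansion and grouping of the double sum; you are also a bit more explicit than the paper about the fact that only the $\lmodg$ relation from~\eqref{0.4} is available (the paper's proof of Lemma~\ref{18111} writes ``$=$'' where strictly speaking only $\lmodg$ holds), and you correctly observe that this suffices since one only needs to land in the ideal $\tGz$.
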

\begin{proof} Apply induction to the lemma.\end{proof}

We recall another way of verifying tropical dependence, in terms
of bilinear forms.

\begin{thm}[{\cite[Theorem~6.7]{IzhakianKnebuschRowen2010LinearAlg}}]\label{thm:GramMat}
If the vectors
 $w_1 \dots, w_k \in V$  span  a nondegenerate subspace $W$ of $V$ with
 $ | \tilG(w_1 \dots, w_k ) | \in \tGz$, then $w_1, \dots, w_k $
are tropically dependent.
\end{thm}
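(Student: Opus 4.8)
The plan is to mimic the classical argument that a nonzero element of the kernel of the Gram matrix produces a nontrivial orthogonality relation, adapted to the supertropical setting where "kernel" is replaced by the g-annihilator and "zero" by the ghost ideal. First I would note that since $|\tilG(w_1,\dots,w_k)| \in \tGz$, the Gram matrix $\tilG = \tilG(w_1,\dots,w_k)$ is singular, so by the supertropical theory of singular matrices (the g-annihilator of a singular matrix is nontrivial) there is a nonzero tangible vector $\a = (\a_1,\dots,\a_k)^\trn$ with $\tilG\,\a \lmodg \fzero$, i.e.\ each coordinate $\sum_j \bil{w_i}{w_j}\a_j$ is ghost. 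Equivalently, setting $v = \sum_j \a_j w_j$, we get $\bil{w_i}{v} \lmodg \fzero$ for every $i$, by using that $B$ is a linear functional in the second variable (so $\bil{w_i}{\sum_j \a_j w_j} \lmodg \sum_j \a_j \bil{w_i}{w_j}$).

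Next I would leverage the assumption that $W = \sum_i F w_i$ is nondegenerate. If $v$ itself were not in $\tHz$ (i.e.\ $v$ is a genuine tangible-supported vector, not ghost), then nondegeneracy gives some $w \in W$ with $\bil v w \notin \tGz$. Writing $w \lmodg \sum_i \bt_i w_i$ and using bilinearity together with the relations $\bil{w_i}{v} \lmodg \fzero$, I would derive $\bil v w \lmodg \sum_i \bt_i \bil{w_i}{v} \lmodg \fzero$ — wait, this needs $B$ symmetric-ish; more carefully, one uses $\bil{v}{w} \lmodg \sum_i \bt_i \bil{v}{w_i}$ and so I actually want the g-annihilator relation on the \emph{left}, $\bil{v}{w_i}\lmodg\fzero$, which I should extract by instead choosing $\a$ with $\a^\trn \tilG \lmodg \fzero$ (a left g-annihilator, again available since $\tilG$ is singular). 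That gives $\bil{v}{w_i}\lmodg\fzero$ for all $i$, hence $\bil{v}{w}\lmodg\fzero$ for all $w\in W$, contradicting nondegeneracy at $v$. Therefore $v \in \tHz$; that is, $\sum_j \a_j w_j \lmodg \fzero$ with the $\a_j$ tangible and not all $\fzero$, which is precisely a tropical dependence of $w_1,\dots,w_k$.

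The one gap to close is the case $v \in \tHz$ arising "for free" — but that is no problem, since $v\in\tHz$ with tangible not-all-zero coefficients $\a_j$ is exactly the definition of $w_1,\dots,w_k$ being tropically dependent, so we are already done in that case; the nondegeneracy argument is only needed to rule out $v\notin\tHz$.

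The main obstacle I anticipate is making the passage from "$|\tilG|\in\tGz$" to "there is a tangible $\a\ne\fzero$ with $\a^\trn\tilG\lmodg\fzero$" fully rigorous: this is the supertropical analogue of "singular $\Rightarrow$ nontrivial kernel," and in the supertropical world one must invoke the correct structural result — here \cite[Theorem~3.4]{IzhakianKnebuschRowen2010LinearAlg}, that the g-annihilator of a matrix of ghost determinant contains a tangible vector (and is nontrivial), cf.\ also \cite[Theorem~3.4]{IzhakianRowen2009TropicalRank} relating rank and tropical dependence of rows/columns. One must check that this applies to $\tilG$ even though $\tilG$ need not be "nonsingular" in any strong sense; the hypothesis $|\tilG|\in\tGz$ is exactly what is needed. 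Everything else is a routine unwinding of the definitions of bilinear form, $\lmodg$, and nondegeneracy.
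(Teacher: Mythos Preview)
The paper does not prove this theorem; it merely quotes it as \cite[Theorem~6.7]{IzhakianKnebuschRowen2010LinearAlg}, so there is no in-paper proof to compare against. That said, your argument is sound and is in fact the standard route to this result: pass from $|\tilG|\in\tGz$ to a tangible $\a\ne\fzero$ with $\a^\trn\tilG$ ghost (this is exactly the equivalence ``$n$ vectors are tropically independent iff their matrix has rank $n$'' recalled in the paper from \cite{IzhakianRowen2008Matrices}, applied to the rows of $\tilG$), set $v=\sum_j\a_j w_j$, deduce $\bil{v}{w_i}\in\tGz$ for all $i$ via bilinearity in the first slot, and then use nondegeneracy of $W$ (i.e.\ $\rad(W)=\tHz$, cf.\ Remark~\ref{15}(i)) together with bilinearity in the second slot to force $v\in\tHz$.

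Two small points worth tightening in a final write-up. First, your self-correction about using a \emph{left} g-annihilator is exactly right: the paper's nondegeneracy and radical are defined via left orthogonality ($S^\perp=\{v:\bil vS\subseteq\tGz\}$), so you need $\bil{v}{w_i}\in\tGz$, which comes from $\a^\trn\tilG$ ghost, not $\tilG\a$ ghost. Second, the reference you want for ``singular $\Rightarrow$ tangible g-annihilator'' is not \cite[Theorem~3.4]{IzhakianKnebuschRowen2010LinearAlg} (as described in the paper, that result goes the other way: the g-annihilator of a \emph{nonsingular} matrix contains no tangible vector), but rather the rank/dependence equivalence from \cite{IzhakianRowen2008Matrices} or \cite[Theorem~3.4]{IzhakianRowen2009TropicalRank}, which gives both directions. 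With those citations in place, your proof is complete.
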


\subsection{The radical with respect to a bilinear form}

\begin{defn}
The \textbf{(left) orthogonal ghost complement} of $S\subseteq V$ is defined
as
$$S^{\perp} := \{ v\in V: \bil vS \in \tGz \}.$$
 The \textbf{radical}, $\rad( V)$, with respect to a given bilinear form $B$,
  is defined as $V^{\perp}.$ Vectors $w_i$ are \textbf{radically
dependent} if $\sum _i \a _i w_i \in \rad (V)$ for suitable $\a_i
\in \tTz,$ not all $\fzero.$

\end{defn}

Clearly, $\tHz \subseteq \rad(V)$.

\begin{rem} \label{15} $ $
\begin{enumerate}\eroman
    \item $\rad (V) = \tHz$ when
$V$ is nondegenerate, in which case radical dependence is the same
as tropical dependence. \pSkip
    \item

Any ghost complement $V'$ of $\rad (V)$ is obviously left
g-orthogonal to $\rad (V),$ and nondegenerate since $$\rad (V') \
\subseteq \ V'  \cap  \rad (V) \ \subseteq \tHz.$$  This
observation enables us to reduce many proofs to nondegenerate
subspaces, especially when the  Gram-Schmidt procedure  is
applicable (described below in Remark~\ref{16}).

\end{enumerate}
\end{rem}

 \begin{prop}\label{thm:CS}
   If
\begin{equation}\label{eq:CS}
 \bil v w  \bil  w v \lmodg \bil vv\, \bil ww,
\end{equation}
and the vector space $\Fz v+\Fz w$ is nondegenerate, then $v,w$
are tropically
 dependent on $\rad(\Fz v+\Fz w)$. Conversely, if $v,w$ are tropically
 dependent on $\rad(\Fz v+\Fz w)$ and $\bil v v$ and $\bil w w$ are tangible, then
 \eqref{eq:CS} holds.

 \end{prop}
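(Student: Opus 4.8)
The plan is to exploit the supertropical analogue of the Cauchy--Schwartz inequality together with Theorem~\ref{thm:GramMat}, specialized to the $2\times 2$ case. First I would observe that for the two vectors $v,w$ the Gram matrix is $\tilG(v,w) = \vmMat{\bil vv}{\bil vw}{\bil wv}{\bil ww}$, whose (supertropical) determinant is $|\tilG(v,w)| = \bil vv\,\bil ww + \bil vw\,\bil wv$. Hypothesis~\eqref{eq:CS} says exactly that $\bil vw\,\bil wv$ weakly dominates $\bil vv\,\bil ww$, so by axiom (b) for supertropical semirings the sum equals a ghost element (either $\bil vw\,\bil wv$ lies in $\tGz$ outright, or $\bil vw \nucong \bil wv$ cases collapse, or the larger summand appears but gets doubled in $\nu$-value comparison), hence $|\tilG(v,w)| \in \tGz$. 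The subtle point here is that ``$\lmodg$'' rather than strict domination is assumed, so I must check that weak domination still forces the $2\times 2$ tropical determinant into the ghost ideal — this follows because if $\bil vw\,\bil wv \nucong \bil vv\,\bil ww$ then the sum is itself its own $\nu$-value, hence ghost, and if $\bil vw\,\bil wv \nug \bil vv\,\bil ww$ one uses \eqref{eq:CS} more carefully, noting $\lmodg$ forces equality up to ghost so $\bil vw\,\bil wv$ itself must be ghost.

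For the forward direction, once $|\tilG(v,w)| \in \tGz$ I would split on whether the space $\Fz v + \Fz w$ is nondegenerate. It is assumed nondegenerate, so I can apply Theorem~\ref{thm:GramMat} directly with $k=2$ and $W = \Fz v + \Fz w$: it yields that $v,w$ are tropically dependent. Since $\rad(\Fz v+\Fz w) = \tHz$ in the nondegenerate case (Remark~\ref{15}(i)), tropical dependence is the same as dependence on $\rad(\Fz v+\Fz w)$, which is what we want. If one prefers not to invoke nondegeneracy of the pair but only of the ambient structure, one argues on the subspace $\Fz v + \Fz w$ directly and passes to a nondegenerate complement of its radical via Remark~\ref{15}(ii); but given the hypothesis as stated, the direct application suffices.

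For the converse, assume $v,w$ are tropically dependent on $\rad(\Fz v+\Fz w)$ and that $\bil vv, \bil ww \in \tT$. Tropical dependence means there exist $\a,\bt \in \tTz$, not both $\fzero$, with $\a v + \bt w \in \rad(\Fz v+\Fz w)$; by nondegeneracy both $\a$ and $\bt$ are actually tangible (if, say, $\bt = \fzero$ then $\a v \in \rad$ forces $v \in \rad \subseteq \tHz$, contradicting $\bil vv \in \tT$). Pair $\a v + \bt w$ against itself: by strictness (or by \eqref{0.4} applied twice, using that $\a v + \bt w$ is g-orthogonal to everything in the span) we get
\[
\a^2 \bil vv + \a\bt\,(\bil vw + \bil wv) + \bt^2 \bil ww \ \lmodg\ \fzero.
\]
Now $\a^2\bil vv$ and $\bt^2\bil ww$ are tangible, so the only way this sum can be ghost is for the cross term $\a\bt(\bil vw+\bil wv)$ to weakly dominate both of them — in particular $\a\bt\,\bil vw\,\bil wv$-type comparisons give $\bil vw\,\bil wv \nuge \bil vv\,\bil ww$ after canceling the tangible scalars $\a,\bt$, which is \eqref{eq:CS}. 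The main obstacle I anticipate is handling the ghost-layer bookkeeping in this last step precisely: one must rule out the degenerate possibility that $\bil vw + \bil wv$ is itself ghost of small $\nu$-value (which cannot swallow two tangibles of possibly different $\nu$-values simultaneously unless it dominates their product appropriately), and this is exactly where the tangibility of $\bil vv$ and $\bil ww$ is used — it is the hypothesis that makes the converse go through.
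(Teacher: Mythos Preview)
Your forward direction is exactly the paper's argument: hypothesis~\eqref{eq:CS} forces $|\tilG(v,w)| = \bil vv\,\bil ww + \bil vw\,\bil wv \in \tGz$, and Theorem~\ref{thm:GramMat} (together with Remark~\ref{15}(i)) finishes.

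The converse, however, has a genuine gap. You pair $\a v + \bt w$ against \emph{itself} and then claim that because $\a^2\bil vv$ and $\bt^2\bil ww$ are tangible, the cross term $\a\bt(\bil vw + \bil wv)$ must weakly dominate both. This is false: if $\a^2\bil vv \nucong \bt^2\bil ww$, their sum is already ghost by axiom~(a), so the three-term sum lies in $\tGz$ even when the cross term is $\nu$-small. Worse, even in the cases where the cross term does dominate, squaring it gives $\a^2\bt^2(\bil vw^2 + \bil wv^2)$ by Frobenius, which controls $\max(\bil vw,\bil wv)^2$ rather than the product $\bil vw\,\bil wv$ that actually appears in~\eqref{eq:CS}. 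Since the proposition does not assume $B$ is supertropically symmetric, you cannot collapse these.

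The paper instead stays with the $2\times 2$ Gram determinant. From $\a v + \bt w \in \rad(\Fz v + \Fz w)$ one pairs against $v$ and against $w$ \emph{separately}, obtaining that the rows of $\tilG(v,w)$ are tropically dependent, hence $|\tilG(v,w)|\in\tGz$. Now the determinant is the two-term sum $\bil vv\,\bil ww + \bil vw\,\bil wv$ with the first summand tangible, so the second summand must satisfy $\bil vw\,\bil wv \lmodg \bil vv\,\bil ww$, which is exactly~\eqref{eq:CS}. Pairing against $v$ and $w$ individually, rather than against $\a v + \bt w$, is the missing idea.
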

\begin{proof}
   If $ \bil v w \, \bil w v \lmodg    \bil vv\, \bil ww$, then
$$\bigg| \vmMat{\bil vv}{\bil vw} {\bil wv} {\bil ww} \bigg| \in
\tGz,$$ so the vectors $v$ and $w$ are tropically
 dependent by Theorem \ref{thm:GramMat}. Conversely, if $\bil vv, \bil ww \in \tT$
 and $\bigg|  \vmMat{\bil vv}{\bil vw} {\bil wv} {\bil ww} \bigg| \in \tGz$,
 then necessarily
  $ \bil v w \, \bil  wv\lmodg \bil vv\, \bil ww$.
\end{proof}

\section{Cauchy-Schwartz spaces}

{\it For convenience, we assume throughout this section that that
$B$ is supertropically symmetric}, i.e., $\bil vw + \bil wv\in
\tGz$ for all $v,w \in V.$ This assumption is justified by the
following
 result from \cite{IzhakianKnebuschRowen2010LinearAlg}:

\begin{thm}[{\cite[Theorem
~6.19]{IzhakianKnebuschRowen2010LinearAlg}}]\label{orthogsym} If
g-orthogonality is a symmetric relation for the supertropical
bilinear form~$B$, then $B$ is  supertropically symmetric.
\end{thm}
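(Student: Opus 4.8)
The plan is to prove the contrapositive: assuming $B$ is \emph{not} supertropically symmetric, I will exhibit vectors showing that $\gperp$ is not symmetric. For definiteness I would run the argument for strict $B$ (the main case of interest here), so that the form genuinely expands on linear combinations; for general $B$ the same computations go through once one checks the ghost slack in $\lmodg$ is harmless, or one may invoke \cite{IzhakianKnebuschRowen2010LinearAlg} directly. The first move is to unwind the hypothesis: if $\bil vw + \bil wv \notin \tGz$ for some $v,w$, then by the supertropical addition rules, after possibly interchanging $v$ and $w$, necessarily $\bil vw \in \tT$ and $\bil vw \nug \bil wv$; since then $v$ is not g-orthogonal to $w$, symmetry of $\gperp$ gives $w$ not g-orthogonal to $v$, i.e. $\bil wv \in \tT$ as well. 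Thus the configuration to be contradicted is: $\bil vw,\bil wv$ both tangible with $\bil vw\nug\bil wv$, and the goal becomes to produce $x,y$ with $\bil xy\in\tGz$ but $\bil yx\in\tT$.

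The construction I would use has two steps, both inside $Fv+Fw$. Step one normalizes the ``diagonal'': set $w' := \a v + w$ for a tangible scalar $\a$ whose $\nu$-value is taken small enough that every $\a$-term is $\nu$-negligible in the comparisons below. Then $\bil{v}{w'}=\bil vw$ and $\bil{w'}{v}=\bil wv$ stay tangible with $\bil{v}{w'}\nug\bil{w'}{v}$, while $c:=\bil{w'}{w'}=\a^2\bil vv+\a\bil vw+\bil ww$ becomes \emph{nonzero}: it equals $\bil ww$ when $\bil ww\ne\fzero$, and $\a\bil vw$ otherwise. Step two perturbs by $v$: since $F$ is a supertropical semifield, choose $\gamma\in\tT$ with $\gamma^\nu=c^\nu\big((\bil wv)^\nu\big)^{-1}$ and put $x:=w'$, $y:=w'+\gamma v$. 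Then $\bil xy=c+\gamma\bil{w'}{v}=c+\gamma\bil wv$, and since $\gamma\bil wv\nucong c$ by the choice of $\gamma$, this sum is $c^\nu\in\tGz$; hence $x\gperp y$, so by the symmetry hypothesis $y\gperp x$. On the other hand $\bil yx=c+\gamma\bil{v}{w'}=c+\gamma\bil vw$, where $(\gamma\bil vw)^\nu=c^\nu(\bil vw)^\nu\big((\bil wv)^\nu\big)^{-1}\nug c^\nu$ precisely because $\bil vw\nug\bil wv$; therefore $\bil yx=\gamma\bil vw\in\tT$, contradicting $y\gperp x$.

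The step I expect to be the real obstacle is the normalization: one must verify that a \emph{single} choice of $\a$ simultaneously keeps both $\bil{v}{w'}$ and $\bil{w'}{v}$ tangible and forces $c\ne\fzero$, and this is exactly the mechanism by which the degenerate configurations --- $\bil vv=\fzero$, $\bil ww=\fzero$, or ``every vector of $Fv+Fw$ is g-isotropic'' --- get absorbed uniformly rather than handled as separate cases. A secondary point is the essential use of the semifield hypothesis on $F$, both to obtain $\gamma$ with the prescribed $\nu$-value and to know $\gamma\bil wv$ and $\gamma\bil vw$ are tangible; and, in the non-strict case, checking that the ghost slack in the two displayed identities for $\bil xy$ and $\bil yx$ does not spoil the $\nu$-comparisons (this is presumably where the original proof in \cite{IzhakianKnebuschRowen2010LinearAlg} needs the more elaborate setup).
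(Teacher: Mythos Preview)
This theorem is not proved in the present paper; it is quoted from \cite[Theorem~6.19]{IzhakianKnebuschRowen2010LinearAlg} without argument, so there is no in-paper proof to compare against. On its own merits, your argument for strict $B$ is correct. One small logical wrinkle: you announce the contrapositive but then invoke symmetry of $\gperp$ (twice) inside the argument; this is harmless, since if $\bil wv\in\tGz$ the pair $(w,v)$ itself already witnesses the asymmetry of $\gperp$, and the final step can simply be read as ``$x\gperp y$ but $y\not\gperp x$''. With that cosmetic adjustment the construction goes through: a tangible $\a$ with $\a^\nu$ below the finitely many relevant ratios exists because the ordered group $\tG$ has no least element, the normalization forces $c=\bil{w'}{w'}\ne\fzero$ in every case, and then $\bil{x}{y}=c+\gamma\bil wv$ is $\nu$-matched hence ghost while $\bil{y}{x}=c+\gamma\bil vw=\gamma\bil vw\in\tT$.

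Your caution about the non-strict case is well-placed rather than merely defensive: for a general supertropical bilinear form the relation $\bil{v}{w'}\lmodg\a\bil vv+\bil vw$ allows an unbounded ghost excess, so the identities $\bil{v}{w'}=\bil vw$ and $\bil{w'}{v}=\bil wv$ on which your final $\nu$-comparison depends can genuinely fail. Deferring that case to \cite{IzhakianKnebuschRowen2010LinearAlg} is precisely what the present paper does as well.
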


Then we have:

 \begin{equation}\label{CS00}
\bil {v+w}{v+w} \lmodg \bil vv + \bil ww + (\bil vw + \bil wv)
\lmodg \bil vv + \bil ww . \end{equation}

%

\subsection{Compatible vectors}

Since we cannot subtract vectors, we  introduce a notion that
plays a key role in the supertropical theory.

\begin{rem} If
$$\bil vw + \bil wv \ge_\nu \bil vv + \bil ww,$$ then $v+w$ is
g-isotropic.  This is clear from the first  $\lmodg $ relation in
\eqref{CS00}.\end{rem}

To avoid such g-isotropic vectors, we formulate the following
definition:

\begin{defn}\label{compat} Vectors $v$ and $w$ are \textbf{\compatible} if
\begin{equation}\label{CS0}  \bil vv + \bil ww \ge_\nu
\bil vw +\bil wv;\end{equation}  \compatible\ vectors $v$ and $w$  are called \textbf{
\scompatible} if either $\bil vv  \nucong \bil ww$ or we have
 strict $\nu$-inequality   in \eqref{CS0}.

 \end{defn}

\begin{example}\label{quad0} If $v+w$ is g-nonisotropic, then  $v$ and $w$ are \scompatible. Indeed, $$\bil {v+w}{v+w} \lmodg \bil vv +
\bil ww + (\bil vw   + \bil wv)$$ is presumed tangible. But $\bil
vw + \bil wv$ is ghost, which means $\bil vw   + \bil wv <_\nu
\bil vv + \bil ww .$ \end{example}

\begin{lem}\label{quad0}    Compatible vectors satisfy $ \bil {v+w}{v+w} \lmodg \bil vv   + \bil ww  ,$
equality holding when $B$ is strict.\end{lem}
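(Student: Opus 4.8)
The plan is to unwind the definition of \compatible\ vectors and feed it into the chain of $\lmodg$-relations already recorded in \eqref{CS00}. Recall that \eqref{CS00} gives
$$\bil {v+w}{v+w} \lmodg \bil vv + \bil ww + (\bil vw + \bil wv) \lmodg \bil vv + \bil ww,$$
where the first relation uses only that $B$ is supertropically symmetric (so $\bil vw + \bil wv \in \tGz$) and the second discards that ghost summand. The first displayed inequality already proves the assertion $\bil {v+w}{v+w} \lmodg \bil vv + \bil ww$ without any compatibility hypothesis at all; so the content of the lemma is really the \emph{equality} clause. Thus the first step is simply to cite \eqref{CS00} for the $\lmodg$-statement.

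For the equality under strictness: when $B$ is strict we have the exact identity
$$\bil {v+w}{v+w} = \bil vv + \bil ww + \bil vw + \bil wv,$$
with genuine equality rather than $\lmodg$. Now I would use the compatibility hypothesis \eqref{CS0}, namely $\bil vv + \bil ww \ge_\nu \bil vw + \bil wv$. The point is that in a supertropical semifield, addition is determined by $\nu$-domination: if $a \nuge b$ then $a + b$ is $\nu$-matched to $a$, and in fact $a + b \in \{a, a^\nu\}$ depending on whether $b \nul a$ or $b \nucong a$; in either case $a + b \lmodg a$ and $a + (b + b') = a + b$ whenever $b, b'$ are both weakly dominated by... — more carefully, one argues: set $a = \bil vv + \bil ww$ and $c = \bil vw + \bil wv$. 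By \eqref{CS0}, $a \nuge c$, so $a + c \nucong a$, hence $a + c \lmodg a$; combined with the reverse $a \lmodg a + c$ (since $a = a + c + (\text{ghost})$ is not automatic — rather $a + c \lmodg a$ already gives one direction and $a \lmodg a+c$ requires $c$ or $c^\nu$ to be a ghost addend). Here one uses that $c = \bil vw + \bil wv$ is ghost (supertropical symmetry), so $a + c$ is obtained from $a$ by adding a ghost, i.e. $a + c \lmodg a$ \emph{and} trivially $a + c$ surpasses... The clean statement: since $c \in \tGz$, we have $a + c \lmodg a$ by definition of $\lmodg$; and since additionally $a \nuge c$, adding $c$ to $a$ does not change $a$ up to $\nu$-equivalence, so in fact the common refinement yields $a + c = a$ when $a \nug c$ and $a+c = a^\nu$ when $a \nucong c$ — but in the latter case $a$ is already ghost, so $\bil vv + \bil ww$ is ghost and then $\bil{v+w}{v+w} = a + c \nucong a$ is ghost too, and the asserted equality $\bil{v+w}{v+w} \lmodg \bil vv + \bil ww$ holds with both sides ghost-equal. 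In the former case $a + c = a$ exactly, giving $\bil{v+w}{v+w} = \bil vv + \bil ww$ on the nose.

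The main obstacle I anticipate is purely bookkeeping: being careful about the distinction between honest equality $=$ and the surpassing relation $\lmodg$, and handling the boundary case $\bil vv \nucong \bil ww$ (or more precisely $\bil vv + \bil ww \nucong \bil vw + \bil wv$) where $\bil vv + \bil ww$ is itself ghost. In that degenerate case "equality" in the lemma should be read in the $\lmodg$ sense (both sides ghost, hence ghost-surpass each other by antisymmetry of $\lmodg$), and I would remark on this rather than claim a literal $=$. Everything else is a direct substitution of the strict expansion of $\bil{v+w}{v+w}$ into \eqref{CS0}, so the proof should be only a few lines.
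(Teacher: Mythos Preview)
Your treatment of the $\lmodg$ assertion is fine and matches the paper's: it is just \eqref{CS00}.

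For the equality clause there is a genuine gap. You invoke only the weak inequality \eqref{CS0}, i.e., $a := \bil vv + \bil ww \nuge c := \bil vw + \bil wv$, but the lemma's hypothesis is the stronger notion of \emph{compatible} (not merely \emph{weakly compatible}): either $\bil vv \nucong \bil ww$, or strict inequality $a \nug c$. Your case split on $a \nug c$ versus $a \nucong c$ has the right shape, but your assertion ``in the latter case $a$ is already ghost'' is not justified by weak compatibility alone and is in fact false in general (e.g.\ $\bil vv = 3 \in \tT$, $\bil ww = 1 \in \tT$, $\bil vw = \bil wv = 3$; then $a = 3$ is tangible while $c = 3^\nu$, and indeed $\bil{v+w}{v+w} = 3^\nu \ne 3 = \bil vv + \bil ww$). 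The claim \emph{is} true under the full compatible hypothesis: if $a \nucong c$ then the strict-inequality alternative fails, so the definition forces $\bil vv \nucong \bil ww$, whence $a = \bil vv^\nu$ is ghost. Once you invoke this, literal equality $=$ holds in both cases (in the boundary case both sides equal $\bil vv^\nu$), so your worry about having to read ``equality'' as merely mutual $\lmodg$ is unnecessary.

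The paper's own proof short-circuits all of this by splitting directly on whether $\bil vv \nucong \bil ww$---the dichotomy already built into the definition of compatible---which makes each branch a one-liner.
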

 \begin{proof}
 $ \bil {v+w}{v+w} \lmodg \bil vv  + \bil
ww+\bil vw   + \bil wv  \lmodg \bil vv   + \bil ww.$
To prove equality   when  $B$ is strict, note that this is
clear unless $\bil vv \nucong \bil ww,$ in which case both sides
are $\bil vv^\nu$.
\end{proof}

\subsection{Cauchy-Schwartz spaces}

 We are ready for the main kind of vector space.

\begin{defn}\label{CauS} A subset $S \subseteq V$ is  \textbf{weakly Cauchy-Schwartz}  if every pair of elements of $S$
satisfies the condition \begin{equation}\label{CS}\bil vv \bil ww
\ge_\nu \bil vw ^2 + \bil wv ^2.\end{equation} $S  $ is
\textbf{Cauchy-Schwartz}  if strict $\nu$-inequality holds in
Equation~\eqref{CS}. A space $ V$ is \textbf{Cauchy-Schwartz} if
it has a Cauchy-Schwartz s,d-base.
\end{defn}

For example, the standard base of $F^{(n)}$ (cf.~Example~\ref{sb})
with respect to the scalar bilinear form is Cauchy-Schwartz.

\begin{lem}\label{comp1} If $\{ v, w \}$ is
Cauchy-Schwartz (resp.~ weakly Cauchy-Schwartz), then  $v$ and $w$
are \scompatible\ (resp. ~\compatible).
 \end{lem}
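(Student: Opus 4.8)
The statement is a direct comparison between the Cauchy-Schwartz inequality \eqref{CS} and the compatibility inequality \eqref{CS0} (and the strict versions of each). The plan is to derive \eqref{CS0} from \eqref{CS} purely by manipulating $\nu$-values in the ordered group $\tG$, exploiting the Frobenius formula \eqref{eq:Frobenius} which gives $(a+b)^2 = a^2+b^2$, so that squaring is an order-isomorphism on $\tG$ (and likewise taking square roots, when $F^2=F$ — but I will avoid needing that by working directly with the $\nu$-values). Concretely, from $\bil vv\,\bil ww \nuge \bil vw^2 + \bil wv^2$ I want to conclude $\bil vv + \bil ww \nuge \bil vw + \bil wv$.

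First I would reduce to a statement about the real ordered group: writing $p = \bil vv^\nu$, $q = \bil ww^\nu$, $r = \bil vw^\nu$, $s = \bil wv^\nu$ as elements of $\tG$, the hypothesis \eqref{CS} says $pq \ge r^2 + s^2 = \max(r^2,s^2) = (\max(r,s))^2$ (using that $\tG$ is totally ordered and $(a+b)^2=a^2+b^2$ together with $a+b = \max(a,b)$ for $a\nncong b$, and trivially when $a\nucong b$). So $pq \ge (\max(r,s))^2$, hence $\sqrt{pq} \ge \max(r,s)$ in the divisible closure, and therefore $\max(p,q) \ge \sqrt{pq} \ge \max(r,s)$ since $\max(p,q) \ge \sqrt{p}\sqrt{q}$ always. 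But $\max(p,q) = p+q = (\bil vv + \bil ww)^\nu$ and $\max(r,s) = r+s = (\bil vw + \bil wv)^\nu$, which is exactly \eqref{CS0}. This shows $\{v,w\}$ weakly Cauchy-Schwartz implies $v,w$ \compatible.

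For the strict (resp.~\scompatible) half: if strict inequality holds in \eqref{CS}, i.e.\ $pq > (\max(r,s))^2$, then I split into cases. If $p \nucong q$ then $v,w$ are \scompatible\ by definition, with nothing more to prove. If $p \ne q$, say $p > q$, then $p^2 > pq > (\max(r,s))^2$, so $p > \max(r,s)$, giving strict inequality $\bil vv + \bil ww >_\nu \bil vw + \bil wv$ in \eqref{CS0}; combined with $p \ne q$ this again yields \scompatible. So in all cases the strict hypothesis forces \scompatible.

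**Main obstacle.** The only real subtlety is bookkeeping with the ghost/tangible dichotomy: \eqref{CS} is stated with $\lmodg$ and $\nuge$ on ghost elements, and I must be careful that passing to $\nu$-values and using $a+b = \max(a,b)$ only when $a\nncong b$ (with $a+b=a^\nu$ when $a\nucong b$) is handled uniformly — in both cases $(a+b)^\nu = \max(a^\nu,b^\nu)$, so this is fine. The genuinely load-bearing algebraic fact is that squaring is monotone and order-reflecting on the totally ordered group $\tG$, so that $pq \ge (\max r,s)^2 \Rightarrow \max(p,q) \ge \max(r,s)$; this is where the argument really lives, and once it is isolated the rest is immediate from the definitions. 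I expect the author's proof to be essentially this, perhaps phrased via Remark~\ref{normal1}-style normalization when $F^2=F$, but the $\nu$-value computation above needs no such hypothesis.
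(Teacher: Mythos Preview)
Your argument is correct and is essentially the paper's own proof: both reduce to the chain $(\bil vv + \bil ww)^2 \nuge \bil vv\,\bil ww \nuge \bil vw^2 + \bil wv^2$ and then use that squaring is order-reflecting on $\tG$. Your detour through $\sqrt{pq}$ in a divisible closure and the case split for the strict version are unnecessary (the paper simply observes that strict inequality $\bil vw^2 <_\nu \bil vv\,\bil ww \nule (\bil vv+\bil ww)^2$ yields $\bil vw <_\nu \bil vv+\bil ww$ directly), but the core idea is identical.
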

\begin{proof}
 Clearly $\bil vv \bil ww \nule (\bil vv   +
\bil ww) ^2.$ Hence,
$$\bil vw ^2 \nule  (\bil vv + \bil
ww)^2,$$  implying $\bil vw \nule \bil vv + \bil ww.$ Analogously,
$\bil wv\nule \bil vv + \bil ww.$ The same argument works for
$\nu$-inequality.
\end{proof}
%

\subsection{Bilinear forms on a space of rank 2}

Much of the theory reduces to the rank 2 situation. We fix some
notation for this subsection. We say that $\{ v_1, v_2\}$ is a
\textbf{corner singular pair} if the Gram matrix of the bilinear
form $B$ with respect to $v_1$ and $v_2$ is $\nu$-matched to a
matrix of the form $\vmMat{\a}{\a \bt}{\a \bt}{  \a \bt ^2}.$

Given vectors $v_1,v_2$ in $V$, let $\a_{ij} = \bil {v_i}{v_j},$
and put $\a = \a_{12} + \a_{21}\in \tGz.$ By symmetry we may
assume that $\a_{11} \le_{\nu} \a_{22}.$ Take $\hal \in \tT$ such
that $\hal  \nucong \a,$ and $\hal_{22} \in \tT$ such that
$\hal_{22} \nucong \a_{22} .$

 In contrast to the classical situation, any space $V$ of rank $\ge 2$ must have g-isotropic vectors,
as seen in the following computation.
\begin{example}\label{1811}

Given $\bt \in \tT,$ we define $w = v_1+\bt v_2$ and have
\begin{equation*}
\begin{array}{lll}
 \bil ww & = & \a_{11} +\a \bt + \a_{22} \bt^2; \\[1mm]
\bil w{v_2} & = &  \a_{12} + \a_{22}\bt;   \\[1mm] \bil {v_2}w & = &
\a_{21} + \a_{22}\bt. \end{array}   \end{equation*}

\begin{description}
    \item[CASE I] $\a_{22} = \fzero.$ Then $\bil ww = \a \bt$, $\bil w{v_2}
=  \a_{12} ,$ and  $\bil {v_2}w = \a_{21}$. Thus, $v_2$ and $w$
are \compatible, and we have reduced to the next case. \pSkip

\item[CASE II] $\a_{22} \ne \fzero.$

 Take $\bt $ large enough; i.e., $\bt  >_\nu
\frac{\a}{\hal_{22}} + 1+ \frac{\a_{11}}{\hal}$. (We discard the
last summand when  $\a = \fzero$.) Then $$\bil ww = \a_{22} \bt^2;
\qquad \bil w{v_2} =   \a_{22}\bt; \qquad    \bil {v_2}w =
 \a_{22}\bt,$$ so $\{w, v_2\}$ is a corner singular pair.
 In particular, when $v_2$ is g-nonisotropic, replacing $v_1$ by $w$ for large
 $\bt$ gives us an independent  pair of g- nonisotropic vectors, but
 at the cost of corner singularity. Next, we look for g-isotropic
 vectors. \pSkip

\item[CASE II.a]   $ \a ^2 \nug  \a_{11}\a_{22}.$
%
At any rate, taking $\bt =  \frac  {\a }{\hal_{22}}$ yields
\begin{equation*} \bil ww =  { \a \bt }^\nu = { \frac  {\a ^2
}{\hal_{22}} }^\nu; \qquad \bil w{v_2} = \a = \bil {v_2}w
\end{equation*}
Thus, the Gram matrix of the bilinear form $B$ with respect to
$v_2$ and $w$ is $\vmMat{\a_{22} }{ \a  }{\a } { \frac {\a ^2
}{\hal_{22}}^\nu},$   so again we have corner singularity.

                        On the other hand,  taking $\bt =  \frac  {\a_{11}
}{\hal}$ yields \begin{equation*}\bil ww = \a_{11}^\nu ; \qquad
\bil w{v_2} =  \a_{12} + \frac  {\a_{11} \a_{22}}{\hal}  ; \qquad
\bil {v_2}w = \a_{21} +   \frac  {\a_{11} \a_{22}}{\hal}.
\end{equation*}
Thus, the Gram matrix of the bilinear form $B$ with respect to
$v_2$ and $w$ is $\vmMat{\a_{22}}{ \gm  }{\dl }{\a_{11}^\nu},$
with $\gm  + \dl  \nucong \a.$

The pair $\{ v, w \}$  is not corner singular, since  $\a_{11}\a_{22} <_
\nu  \a^2.$

For the situation  $\a \in \tGz$,  we see that $w$ is g-isotropic
when $\a \bt \ge_ \nu \a_{11}$ and $\a \bt \ge_ \nu \a_{22}
\bt^2$, i.e.,
$$ \frac  {\a }{\hal_{22}}  \ge_\nu
\bt \ge_\nu \frac{\a_{11}} {\hal }.$$ We call this range of
$\bt$ the \textbf{g-isotropic strip} of the plane. 
\pSkip

\item[CASE II.b]  $ \a ^2 \le_\nu  \a_{11}\a_{22}$. 
Then $\bil ww = \a_{11}  + \a_{22}
                        \bt^2.$ Thus, $w$ is g-isotropic for $\bt^2 \nucong \frac {
                        \a_{11}}{\hal_{22}}.$
\end{description}

\end{example}

\begin{lem}\label{degen} When $F^2 = F,$
any space of tangible rank at least two contains an g-isotropic
vector.\end{lem}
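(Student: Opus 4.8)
The plan is to reduce immediately to a rank-$2$ subspace and then invoke the computations of Example~\ref{1811}. Concretely, pick any two tropically independent tangible vectors $v_1, v_2 \in V$ (these exist since $V$ has tangible rank $\ge 2$), and work inside the plane $Fv_1 + Fv_2$. Set $\a_{ij} = \bil{v_i}{v_j}$ and $\a = \a_{12} + \a_{21} \in \tGz$ as in the setup preceding Example~\ref{1811}, and assume without loss of generality that $\a_{11} \nule \a_{22}$. The goal is to produce a tangible $\bt$ for which $w = v_1 + \bt v_2$ satisfies $\bil w w \in \tGz$.

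First I would dispose of the degenerate subcase $\a_{22} = \fzero$ (CASE I): there $\bil w w = \a\bt \in \tGz$ for every tangible $\bt$, so $w$ itself is already g-isotropic (when $\a = \fzero$ as well, any nonzero vector of the plane is g-isotropic and we are trivially done, since then the whole plane is ghost on itself, contradicting tangible rank $2$ unless... — more carefully, if $\a = \a_{22} = \fzero$ then $\bil{v_1+\bt v_2}{v_1 + \bt v_2} = \a_{11}$, so replacing $v_1$ by a scalar multiple makes this $\fone$ or handles it directly; in any case a g-isotropic vector is available). So assume $\a_{22} \ne \fzero$. Now split on the sign of $\a^2$ versus $\a_{11}\a_{22}$, exactly as in CASES II.a and II.b. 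In CASE II.a ($\a^2 \nug \a_{11}\a_{22}$), take $\hal_{22} \in \tT$ with $\hal_{22} \nucong \a_{22}$ and $\hal \in \tT$ with $\hal \nucong \a$, and choose $\bt \in \tT$ with $\bt \nucong \frac{\a_{11}}{\hal}$ (using $F^2 = F$ is not even needed here — $\bt$ is a quotient of tangibles, hence tangible); the computation in Example~\ref{1811}, CASE II.a shows $\bil w w = \a_{11}^\nu \in \tGz$. In CASE II.b ($\a^2 \nule \a_{11}\a_{22}$), we have $\bil w w = \a_{11} + \a_{22}\bt^2$, and here is where $F^2 = F$ enters: choose $\bt \in \tT$ with $\bt^2 \nucong \frac{\a_{11}}{\hal_{22}}$ — such a $\bt$ exists precisely because every element of $F$ (in particular $\frac{\a_{11}}{\hal_{22}}$) is a square — and then the two summands $\a_{11}$ and $\a_{22}\bt^2$ are $\nu$-matched, forcing $\bil w w = \a_{11}^\nu \in \tGz$.

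In every case we have exhibited a tangible vector $w$ in $Fv_1 + Fv_2 \subseteq V$ with $\bil w w \in \tGz$, i.e.\ a g-isotropic vector, completing the proof. The only place where the hypothesis $F^2 = F$ is genuinely used is CASE II.b, where we must extract a square root of $\frac{\a_{11}}{\hal_{22}}$ to balance the quadratic term against the constant term; without it one can only conclude $\bil w w \nucong \a_{11} + \a_{22}\bt^2$ and the two terms may never $\nu$-match. The main (minor) obstacle is bookkeeping: making sure the edge cases $\a = \fzero$ and $\a_{11} = \fzero$ (one or both of the diagonal entries vanishing) are handled — but when $\a_{11} = \fzero$ the vector $v_1$ is itself g-isotropic and we are done without any choice of $\bt$, and the case $\a = \fzero$ with $\a_{22} \ne \fzero$ falls under CASE II.b with $\a_{11}\a_{22} \nuge \fzero = \a^2$, so no new work is required.
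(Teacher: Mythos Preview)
Your proposal is correct and takes essentially the same approach as the paper, whose entire proof reads ``By Example~\ref{1811}, since we have $\a \in \tGz$'': you have simply unpacked that reference case by case. One minor cleanup: your CASE~I parenthetical is unnecessarily tangled --- since you already arranged $\a_{11} \nule \a_{22}$, the hypothesis $\a_{22} = \fzero$ forces $\a_{11} = \fzero$, so $v_1$ itself is g-isotropic and no further subcase analysis (or worry about ``contradicting tangible rank~$2$'') is needed there.
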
 \begin{proof} By Example~\ref{1811}, since we
have $\a \in \tGz$.
\end{proof}

%


\subsection{The Gram-Schmidt procedure}

We start with a standard sort of calculation.

\begin{rem} \label{16}  (\textbf{The Gram-Schmidt procedure}) Suppose $W\subset V$ is   supertropically  spanned by an g-orthogonal set
$\tB = \{b_1, \dots, b_m\}$ for which each $\bil {b_j}{b_j}\ne
\fzero $. We take $\bt_j \in \tT$ for which $\bt_j \nucong \bil
{b_j}{b_j}.$  Then for each $j = 1 , \dots,  m$, and for any $v\in V,$
the vector $$v_\tB \ds = \sum _{j=1}^m \frac {\bil v
{b_j}}{\bt_j}b_j \in W
$$ satisfies
\begin{equation}\label{GSch0} \bil{v }{v_\tB}  \lmodg   \sum
_j \frac {\bil v {b_j}^2}{\bt_j}  \quad \text{and}  \quad \bil
{v_\tB}v \lmodg \sum _j \frac {\bil v {b_j} \bil {b_j} v }{\bt_j}
.\end{equation} The vector $v'_\tB =  v + v_\tB$ satisfies
\begin{equation}\label{GSch00}\bil {v'_\tB}{ b_i}
\lmodg \bil v{b_i}^\nu + \sum_{j\ne i} \frac {\bil v
{b_j}^2}{\bt_j}\bil{b_j}{b_i} \in \tGz.\end{equation} Hence,
$v'_\tB \gperp b_i$ for each $i$,
implying $v'_\tB \gperp W$. 
Furthermore,
\begin{equation}\label{GSch} \bil {v'_\tB}{v'_\tB} \lmodg \bil vv +  \sum
_j \frac {\bil v {b_j}(\bil v {b_j}+\bil   {b_j}v )}{\bt_j} +
\sum_{j , k} \frac {\bil v {b_j}\bil v {b_k} \bil {b_j} {b_k}}
{\bt_j\; \bt_k}.\end{equation} Equality holds in Equations
~\eqref{GSch0}, \eqref{GSch00}, and ~\eqref{GSch} when the
bilinear form $B$ is strict.
\end{rem}

\begin{lem}\label{GS0} Notation as in Remark \ref{16}, suppose that $\tB$
is weakly Cauchy-Schwartz. Then
$$\bil {v'_\tB}{v'_\tB} \lmodg \bil {v }{v }  +  \sum
_j \frac {\bil v {b_j}(\bil v {b_j}+\bil   {b_j}v )}{\bt_j} ,$$
equality holding when the bilinear form $B$ is strict.
\end{lem}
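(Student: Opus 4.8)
The statement compares two expressions for $\bil{v'_\tB}{v'_\tB}$: the general formula \eqref{GSch} from Remark~\ref{16}, and the simpler expression obtained by discarding the double sum $\sum_{j,k}\frac{\bil v{b_j}\bil v{b_k}\bil{b_j}{b_k}}{\bt_j\bt_k}$. Since Remark~\ref{16} already gives $\bil{v'_\tB}{v'_\tB}\lmodg (\text{RHS of \eqref{GSch}})$, and in the strict case equality holds there, the plan is to show that the double-sum term is dominated ($\nule$) by the terms we are keeping, so that dropping it changes nothing under the $\lmodg$ relation (and nothing up to $\nu$-value, hence nothing at all in the strict case where both sides are honest sums).

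\textbf{Key steps.} First I would split the double sum $\sum_{j,k}$ into the diagonal part $j=k$ and the off-diagonal part $j\ne k$. For the diagonal part, the $j=k$ term is $\frac{\bil v{b_j}^2\bil{b_j}{b_j}}{\bt_j^2}$, and since $\bt_j\nucong\bil{b_j}{b_j}$ this is $\nucong \frac{\bil v{b_j}^2}{\bt_j}$, which is exactly (up to $\nu$) one of the summands already present in the kept term $\sum_j\frac{\bil v{b_j}(\bil v{b_j}+\bil{b_j}v)}{\bt_j}$ (the $\bil v{b_j}\cdot\bil v{b_j}$ piece). So the diagonal contributes nothing new. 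Second, for the off-diagonal part $j\ne k$, I would invoke the weak Cauchy-Schwartz hypothesis on the pair $\{b_j,b_k\}$: by \eqref{CS}, $\bil{b_j}{b_j}\bil{b_k}{b_k}\ge_\nu \bil{b_j}{b_k}^2 + \bil{b_k}{b_j}^2$, hence in particular $\bil{b_j}{b_k}\le_\nu \hal_j\hal_k$ where $\hal_j\nucong\bil{b_j}{b_j}$ (the square-root estimate, as in the proof of Lemma~\ref{comp1}, noting we do not literally need square roots — we only compare $\nu$-values, so $\bil{b_j}{b_k}^2\le_\nu\bt_j\bt_k$ suffices, giving $\left(\frac{\bil v{b_j}\bil v{b_k}\bil{b_j}{b_k}}{\bt_j\bt_k}\right)^2\le_\nu \frac{\bil v{b_j}^2}{\bt_j}\cdot\frac{\bil v{b_k}^2}{\bt_k}$). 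Thus each off-diagonal term is $\nule$ the geometric-mean, hence $\nule$ the sum, of two diagonal terms $\frac{\bil v{b_j}^2}{\bt_j}$ and $\frac{\bil v{b_k}^2}{\bt_k}$, each of which (as noted) is $\nule$ a summand of the kept term. Consequently the entire double sum is $\nule$ the kept sum $\sum_j\frac{\bil v{b_j}(\bil v{b_j}+\bil{b_j}v)}{\bt_j}$, and a fortiori $\nule\bil vv + \sum_j(\cdots)$. Therefore \eqref{GSch} and the claimed inequality have $\nu$-matched right-hand sides, so $\bil{v'_\tB}{v'_\tB}\lmodg$ the claimed expression follows from \eqref{GSch}; and when $B$ is strict, equality holds in \eqref{GSch}, and since the dropped term is $\nule$ a term still present, the two right-hand sides are in fact equal (addition absorbs a $\nu$-smaller summand), giving equality in the lemma.

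\textbf{Main obstacle.} The delicate point is the off-diagonal bound: I must be careful that the weak Cauchy-Schwartz inequality is stated with $\bil{b_j}{b_k}^2+\bil{b_k}{b_j}^2$ on the right, so it controls $\bil{b_j}{b_k}$ and $\bil{b_k}{b_j}$ separately only in $\nu$-value, and I should phrase everything via $\nu$-values and the ordering on $\tG$ rather than attempting to take literal square roots (which may not exist unless $F^2=F$). A second small care point: in the strict case the ``equality'' in the lemma uses that in a supertropical semiring $x+y=x$ whenever $y<_\nu x$ and $x+y=x^\nu$ whenever $x\nucong y$ — so when the dropped diagonal term is $\nucong$ a kept summand rather than strictly smaller, the kept sum already equals its own $\nu$-value in that coordinate, and adding the diagonal term back does not change it. I would record this explicitly to justify that dropping the double sum is legitimate even at equality, not just up to $\lmodg$.
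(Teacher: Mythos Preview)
Your proposal is correct and follows essentially the same route as the paper: both use the weak Cauchy--Schwartz hypothesis in the form $\bt_j\bt_k\ge_\nu\bil{b_j}{b_k}^2$ to get the product inequality $\frac{\bil v{b_j}^2}{\bt_j}\cdot\frac{\bil v{b_k}^2}{\bt_k}\ge_\nu\big(\frac{\bil v{b_j}\bil v{b_k}\bil{b_j}{b_k}}{\bt_j\bt_k}\big)^2$, conclude that each double-sum term is $\le_\nu$ one of the $\frac{\bil v{b_j}^2}{\bt_j}$, and then absorb it into the ghost summand $\frac{\bil v{b_j}(\bil v{b_j}+\bil{b_j}v)}{\bt_j}$ (ghost because $B$ is supertropically symmetric). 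The only organizational difference is that the paper opens with a short case split on whether $\bil{v'_\tB}{v'_\tB}$ is tangible, whereas you argue directly that the two right-hand sides coincide; one small phrasing point: when you say the two right-hand sides are ``$\nu$-matched'', what you have actually shown (and what is needed) is that they are \emph{equal}, since the kept summands are ghost and therefore absorb the dropped terms outright --- you note this yourself in your ``main obstacle'' paragraph, so just promote that observation into the main argument.
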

\begin{proof} The first assertion is clear unless $\bil {v'_\tB}{v'_\tB}$ is tangible, which means that there
is one term in the right of \eqref{GSch} which dominates all
others, and again we have the first assertion unless this term
comes from $\sum_{j , k} \frac {\bil v {b_j}\bil v {b_k} \bil
{b_j} {b_k}} {\bt_j\; \bt_k}.$

Now note that ${v'_\tB}$ is g-orthogonal to each $b_i$, as
observed above.

Also, by hypothesis, \begin{equation}\label{use1} \bt_j  \bt_k
\nuge  \bil {b_j}{b_k}^2,\end{equation} so multiplying both sides
by $\frac {\bil v {b_j}^2 \bil v {b_k}^2}{\bt_j^2\bt_k^2} $ yields
$$\frac {\bil v {b_j}^2}{\bt_j}\frac {\bil v {b_k}^2}{\bt_k} \ds \nuge \frac {\bil v {b_j}^2 \bil v {b_k}^2
\bil{b_j}{b_k}^2}{\bt_j^2 \bt_k^2};$$ it follows that either
$$\frac {\bil v {b_j}^2}{\bt_j} \nuge \frac {\bil v {b_j} \bil v
{b_k} \bil{b_j}{b_k}}{\bt_j \bt_k}\qquad \text{or} \qquad \frac
{\bil v {b_k}^2}{\bt_k} \nuge \frac {\bil v {b_j} \bil v {b_k}
\bil{b_j}{b_k}}{\bt_j\bt_k}.$$ Thus the  terms in the right side
of \eqref{GSch00} are all weakly dominated by the $\frac {\bil v
{b_j}^2}{\bt_j},$ and thus get absorbed by the terms $\frac {\bil
v {b_j}(\bil v {b_j}+\bil   {b_j}v )}{\bt_j},$ since $\bil v
{b_j}+\bil   {b_j}v \in \tGz$ by hypothesis.

%
%
%
Hence,
\begin{equation}\label{GSch1} \bil {v'_\tB}{v'_\tB} \lmodg \bil vv  +  \sum
_j \frac {\bil v {b_j}(\bil v {b_j}+\bil   {b_j}v
)}{\bt_j}.\end{equation} The last assertion is now clear,  and
equality holds at each stage of our argument when $B$ is strict
(since again the terms in the right side of \eqref{GSch00}
disappear).
\end{proof}

Looking carefully at the proof, we have the \textbf{dominant
index} (or indices) $j'$ such that $$\frac { (\bil v {b_{j'}}
+\bil {b_{j'}}v)^2}{\bt_{j'}} \ds \ge\! _\nu \  \sum _j \frac {
(\bil v {b_j}+\bil {b_j}v)^2}{\bt_j} ,$$ and $v'_\tB$ is
g-nonisotropic if $v$ and $b_{j'}$ are Cauchy-Schwartz for all
dominant indices. Consequently, we have the following result.

\begin{prop} The vector $v'_\tB$ is g-nonisotropic if $v$ is g-nonisotropic and
$\bil vv \nug \sum _j \frac { (\bil v {b_j}+\bil {b_j}v
)^2}{\bt_j}$, which is true when $v$ and $b_j$ are Cauchy-Schwartz
for each $j$. In this case,  $\bil {v'_\tB}{v'_\tB} = \bil vv.$

Conversely,    for a dominant index $j$, the ghost $\bil v
{b_j}+\bil {b_j}v$ dominates $\bil vv$ when the vectors $v$ and~
$b_j$ are not Cauchy-Schwartz, implying  $v'_\tB$ is g-isotropic.
\end{prop}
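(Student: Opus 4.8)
The plan is to route everything through Lemma~\ref{GS0}, keeping its standing hypothesis that $\tB$ is weakly Cauchy-Schwartz. That lemma hands us
$$\bil {v'_\tB}{v'_\tB} \ \lmodg \ \bil vv \ + \ \sum_j \frac{\bil v{b_j}\,(\bil v{b_j}+\bil{b_j}v)}{\bt_j},$$
with equality when $B$ is strict. First I would record two elementary facts that do all the work: each $\bil v{b_j}+\bil{b_j}v$ lies in $\tGz$ by supertropical symmetry, so every summand on the right above is a ghost; and since $\bil v{b_j}\nule \bil v{b_j}+\bil{b_j}v$, each such summand is $\nu$-bounded by $\frac{(\bil v{b_j}+\bil{b_j}v)^2}{\bt_j}$.

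For the forward assertion I would assume $\bil vv \nug \sum_j \frac{(\bil v{b_j}+\bil{b_j}v)^2}{\bt_j}$. The second fact then makes the tangible summand $\bil vv$ strictly $\nu$-dominate the entire ghost sum, so the right-hand side collapses to $\bil vv$ and $\bil{v'_\tB}{v'_\tB}\lmodg \bil vv\in\tT$. To upgrade this to equality I would look back at the full Gram-Schmidt expansion~\eqref{GSch}: under the same hypothesis together with the weak Cauchy-Schwartz property (these are exactly the $\nu$-estimates already used in the proof of Lemma~\ref{GS0}), every summand appearing in~\eqref{GSch} other than $\bil vv$ has strictly smaller $\nu$-value than $\bil vv$, so $\bil{v'_\tB}{v'_\tB}$ cannot be a ghost and must equal $\bil vv$; hence $v'_\tB$ is g-nonisotropic with $\bil{v'_\tB}{v'_\tB}=\bil vv$. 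To see that the displayed $\nu$-inequality does hold whenever every pair $\{v,b_j\}$ is Cauchy-Schwartz, I would invoke the Frobenius formula~\eqref{eq:Frobenius}: $(\bil v{b_j}+\bil{b_j}v)^2=\bil v{b_j}^2+\bil{b_j}v^2$, so the Cauchy-Schwartz condition on $\{v,b_j\}$ reads $\bil vv\,\bil{b_j}{b_j}\nug (\bil v{b_j}+\bil{b_j}v)^2$; dividing by $\bt_j\nucong\bil{b_j}{b_j}$ gives $\bil vv\nug \frac{(\bil v{b_j}+\bil{b_j}v)^2}{\bt_j}$ for each $j$, and a finite sum of ghosts, each of $\nu$-value below $\bil vv$, still has $\nu$-value below $\bil vv$.

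For the converse I would take a dominant index $j$ for which $\{v,b_j\}$ is not Cauchy-Schwartz, so that $\bil vv\,\bil{b_j}{b_j}\nule (\bil v{b_j}+\bil{b_j}v)^2$. After interchanging $v$ and $b_j$ if necessary we may assume $\bil v{b_j}\nuge\bil{b_j}v$, whence $\bil v{b_j}+\bil{b_j}v\nucong\bil v{b_j}$ and $\frac{\bil v{b_j}(\bil v{b_j}+\bil{b_j}v)}{\bt_j}\nucong\frac{(\bil v{b_j}+\bil{b_j}v)^2}{\bt_j}\nuge\bil vv$. Thus this ghost summand weakly $\nu$-dominates $\bil vv$, and since $j$ is dominant it also weakly $\nu$-dominates every other summand on the right of the Lemma~\ref{GS0} inequality; so that right-hand side is a ghost of $\nu$-value $\nuge\bil vv$, and $\bil{v'_\tB}{v'_\tB}$, ghost-surpassing a ghost, is itself a ghost --- i.e.\ $v'_\tB$ is g-isotropic.

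I expect the one genuinely delicate step to be the equality $\bil{v'_\tB}{v'_\tB}=\bil vv$ in the forward direction: Lemma~\ref{GS0} only yields the surpassing relation $\lmodg$, and for a non-strict form one must still exclude the possibility that $\bil{v'_\tB}{v'_\tB}$ is a ghost $\nu$-exceeding $\bil vv$. My plan is to handle this through the term structure of~\eqref{GSch} --- every summand there is $\nu$-dominated by $\bil vv$, and $\bil vv$ itself is one of them --- the point being that evaluating the ghost-surpassing bilinear form on $v'_\tB=v+v_\tB$ cannot manufacture a value of strictly larger $\nu$-value than that dominant tangible term; when $B$ is strict this subtlety evaporates, since Lemma~\ref{GS0} gives the equality directly.
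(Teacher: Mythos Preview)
Your overall strategy---feed everything through Lemma~\ref{GS0} and decide which term on the right $\nu$-dominates---is exactly the paper's approach, and your forward direction is considerably more explicit than the paper's two-line sketch; the Frobenius argument showing that each Cauchy-Schwartz pair $\{v,b_j\}$ forces $\bil vv\nug \frac{(\bil v{b_j}+\bil{b_j}v)^2}{\bt_j}$ is precisely what the paper means by its first sentence. You are also right to flag the equality $\bil{v'_\tB}{v'_\tB}=\bil vv$ as delicate for non-strict $B$: the paper does not address this, and your heuristic that the form ``cannot manufacture a larger $\nu$-value'' is not itself a proof; the clean case is strict $B$, where Lemma~\ref{GS0} already gives equality.

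There is, however, a genuine slip in your converse. The move ``interchange $v$ and $b_j$ if necessary so that $\bil v{b_j}\nuge\bil{b_j}v$'' is not a valid WLOG: the summand in Lemma~\ref{GS0} is $\frac{\bil v{b_j}(\bil v{b_j}+\bil{b_j}v)}{\bt_j}$, whose first factor is specifically $\bil v{b_j}$, not the symmetric quantity $\bil v{b_j}+\bil{b_j}v$. If instead $\bil{b_j}v\nug\bil v{b_j}$ (permitted under supertropical symmetry when $\bil{b_j}v\in\tG$), this summand is only $\nu$-matched with $\frac{\bil v{b_j}\,\bil{b_j}v}{\bt_j}$, which can be strictly smaller than $\frac{(\bil v{b_j}+\bil{b_j}v)^2}{\bt_j}$, and your chain of inequalities breaks. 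The paper's own proof of the converse is a bare restatement of the conclusion, so it offers no guidance; the honest repair is to split on the dichotomy forced by supertropical symmetry---either $\bil v{b_j}\nucong\bil{b_j}v$ (your good case, and the case the statement is really calibrated to, e.g.\ symmetric $B$) or the $\nu$-larger of the two is already ghost---and treat the second case on its own terms.
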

\begin{proof}  $\bil vv$ dominates $
\sum _j \frac { (\bil v {b_j}+\bil {b_j}v )^2}{\bt_j}$ if $v$ and
$b_j$ are Cauchy-Schwartz for each $j$.

Conversely,  $v'_\tB$ is g-isotropic when the vectors $v$ and
$b_j$ are not Cauchy-Schwartz.
\end{proof}

\begin{defn}
A  space is \textbf{anisotropic} if it  has a g-orthogonal
d,s-base of g-nonisotropic vectors which is Cauchy-Schwartz.
\end{defn}

Our definition of anisotropic space is weaker than the classical
definition, as it must be in view of Lemma~\ref{degen}.
The situation becomes clearer when the bilinear form $B$ is strict.  Applying
induction to Lemma~\ref{GS0}, we have the following conclusion:
\begin{prop}\label{CS1}
 When the bilinear form $B$ is strict, any nondegenerate space supertropically spanned by a
Cauchy-Schwartz set $S = \{ s_1, \dots, s_n \}$  is itself
Cauchy-Schwartz. \end{prop}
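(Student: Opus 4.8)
The plan is to prove Proposition~\ref{CS1} by induction on $n$, the size of the spanning set $S$, reducing each step to the rank-2 analysis of Lemma~\ref{GS0} together with the Gram–Schmidt construction of Remark~\ref{16}. The base case $n=1$ is trivial. For the inductive step, let $W_{n-1}$ denote the subspace spanned by $\{s_1,\dots,s_{n-1}\}$; by the inductive hypothesis this is Cauchy–Schwartz, so (after passing to a g-orthogonal d,s-base via Gram–Schmidt, which is legitimate because $B$ is strict and the space is nondegenerate, using Remark~\ref{15}(ii) to stay nondegenerate) we may assume $\{b_1,\dots,b_{n-1}\}$ is a g-orthogonal Cauchy–Schwartz d,s-base of $W_{n-1}$ with each $\bil{b_j}{b_j}\ne\fzero$. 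We then apply the Gram–Schmidt step to $v=s_n$: set $\bt_j\in\tT$ with $\bt_j\nucong\bil{b_j}{b_j}$ and form $b_n:=v'_\tB=s_n+\sum_j\frac{\bil{s_n}{b_j}}{\bt_j}b_j$. By Remark~\ref{16}, $b_n\gperp b_i$ for every $i<n$, so $\{b_1,\dots,b_n\}$ is g-orthogonal; since $b_n$ differs from $s_n$ by an element of $W_{n-1}$, this set still spans the whole space and hence is a d,s-base (it has $n$ independent elements by nondegeneracy).

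The heart of the argument is then to check that this new g-orthogonal d,s-base is Cauchy–Schwartz, i.e.\ that each pair $\{b_i,b_j\}$ satisfies strict $\nu$-inequality in \eqref{CS}. For $i,j<n$ this holds by construction. The remaining pairs are $\{b_i,b_n\}$ for $i<n$, and here I would use strictness: since $B$ is strict, Lemma~\ref{GS0} gives the equality
$$\bil{b_n}{b_n}=\bil{s_n}{s_n}+\sum_j\frac{\bil{s_n}{b_j}(\bil{s_n}{b_j}+\bil{b_j}{s_n})}{\bt_j}.$$
Also, by strictness, $\bil{b_n}{b_i}$ equals the single surviving term from expanding $\bil{v'_\tB}{b_i}$, which is ghost (the whole expression is ghost, and strictness forces it to equal its ghost parts); more precisely $\bil{b_n}{b_i}\nucong\bil{s_n}{b_i}+\sum_{j\ne i}\frac{\bil{s_n}{b_j}}{\bt_j}\bil{b_j}{b_i}$, and by the g-orthogonality of the $b_j$'s and the Cauchy–Schwartz estimate $\bt_j\bt_k\nug\bil{b_j}{b_k}^2$ used in the proof of Lemma~\ref{GS0}, every term here is weakly dominated by $\bil{s_n}{b_i}$. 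So $\bil{b_n}{b_i}\nucong\bil{s_n}{b_i}$ and likewise $\bil{b_i}{b_n}\nucong\bil{b_i}{s_n}$. Hence it suffices to show
$$\bil{b_i}{b_i}\,\bil{b_n}{b_n}\ \nug\ \bil{s_n}{b_i}^2+\bil{b_i}{s_n}^2.$$

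The main obstacle is precisely this last $\nu$-inequality, because it must be deduced from the \emph{pairwise} Cauchy–Schwartz hypotheses on the original set $S$, not from any global assumption. The plan here is: the hypothesis that $\{s_n,s_i\}$ and more generally $\{s_n,s_k\}$ are Cauchy–Schwartz, transported through the Gram–Schmidt formulas, should yield $\bil{s_n}{b_i}^2+\bil{b_i}{s_n}^2\ \nul\ \bil{b_i}{b_i}\,\bil{s_n}{s_n}$ (a rank-2 Cauchy–Schwartz statement with $b_i$ in place of $s_i$, justified because $b_i$ is a Gram–Schmidt modification of $s_i$ lying in $W_{i}$ and the estimates are all of the "every cross term is $\nu$-dominated" type already appearing in Lemma~\ref{GS0}). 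Since $\bil{b_n}{b_n}\nuge\bil{s_n}{s_n}$ from the displayed equality above, multiplying through gives the required strict inequality. One must be slightly careful with the degenerate sub-case $\bil{b_i}{b_i}\nucong\bil{b_n}{b_n}$, where one checks directly that the left side is the common $\nu$-value squared while the right side is strictly smaller; but this is the same bookkeeping as in the proofs of Lemma~\ref{quad0} and Lemma~\ref{GS0}, so I would dispatch it by citing those arguments rather than repeating the computation.
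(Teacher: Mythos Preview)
Your approach is quite different from the paper's, and also has a genuine gap.

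The paper does not use induction or Gram--Schmidt at all. It gives a direct computation: for arbitrary $v=\sum_i\alpha_i s_i$ and $w=\sum_j\beta_j s_j$ in the span, strictness gives $\bil vw=\sum_{i,j}\alpha_i\beta_j\bil{s_i}{s_j}$, and then the Frobenius identity \eqref{eq:Frobenius} yields
\[
\bil vw^2=\Big(\sum_{i,j}\alpha_i\beta_j\bil{s_i}{s_j}\Big)^2=\sum_{i,j}\alpha_i^2\beta_j^2\bil{s_i}{s_j}^2,
\]
which by the pairwise Cauchy--Schwartz hypothesis on $S$ is dominated by $\sum_{i,j}\alpha_i^2\beta_j^2\bil{s_i}{s_i}\bil{s_j}{s_j}$, and this in turn is dominated by $\bil vv\,\bil ww$. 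That is the entire argument: one application of Frobenius and a term-by-term comparison.

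Your inductive Gram--Schmidt route, besides being much heavier, has a real gap at the step you flag as ``the main obstacle.'' The claim that ``every term here is weakly dominated by $\bil{s_n}{b_i}$'' does not follow from $\bt_j\bt_k\nug\bil{b_j}{b_k}^2$: that estimate only gives $\frac{\bil{s_n}{b_j}}{\bt_j}\bil{b_j}{b_i}\nul\frac{\bil{s_n}{b_j}\sqrt{\bt_i}}{\sqrt{\bt_j}}$, which need not be $\nule\bil{s_n}{b_i}$ (e.g.\ when $\bil{s_n}{b_i}$ is small or $\fzero$). More seriously, the inequality you ultimately need, $\bil{s_n}{b_i}^2+\bil{b_i}{s_n}^2\nul\bil{b_i}{b_i}\,\bil{s_n}{s_n}$, is a Cauchy--Schwartz statement for the pair $\{s_n,b_i\}$ where $b_i$ is a linear combination of $s_1,\dots,s_i$. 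Deducing this from the pairwise hypotheses on the $s_k$ is precisely the content of the proposition itself, so the argument is circular unless you insert a Frobenius-type computation at that point---at which stage you may as well run the paper's direct proof from the start.
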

\begin{proof}
\pSkip (i) Letting  $v= \sum \a_i s_i$ and $w = \sum \bt_j s_j$,
then $\bil vw = \sum_{i,j} \al_{i}\bt_{j}\bil{s_i}{s_j}$. By the
Frobenius formula~\eqref{eq:Frobenius},
\begin{equation*}\label{eq:2} \bigg(\sum_{i,j} \al_{i}\bt_{j}\bil{s_i}{s_j}\bigg)^2 \ds =
\sum_{i,j} \al_{i}^2\bt_{j}^2\bil{s_i}{s_j}^2,
\end{equation*}
which is dominated by $\sum_{i,j}
\al_{i}^2\bt_{j}^2\bil{s_i}{s_i}\bil{s_j}{s_j}$ since $s_i$ and
$s_j$ are Cauchy-Schwartz, which in turn is dominated by $ \bil
vv\, \bil ww.$ \end{proof}

\begin{defn} A space $W$ is the \textbf{g-orthogonal sum} of two
subspaces $W_1$ and $W_2$ if $W_1+W_2 = W$ and $\bil {W_1}{W_2}
\subseteq \tGz,$ such that any pair of vectors $\{ w_1, w_2\}$
with $w_i \in W_i$ is not Cauchy-Schwartz.\end{defn}


\begin{thm}\label{decompthm}
Any space V with a (supertropically symmetric) bilinear form $B$
has a thick subspace which is the g-orthogonal sum of an
anisotropic subspace  and a supertropically alternate subspace.
\end{thm}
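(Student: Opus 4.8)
The plan is to mimic the classical Witt-type decomposition, built on the supertropical Gram--Schmidt machinery already developed. First I would pass to a nondegenerate situation: by Remark~\ref{15}(ii), choose a ghost complement $V'$ of $\rad(V)$, so that $V'$ is nondegenerate and left g-orthogonal to $\rad(V)$, and $V = V' + \rad(V)$ with $\rad(V') \subseteq \tHz$. Since $\rad(V)$ is left g-orthogonal to everything and contains $\tHz$, any supertropically alternate piece we peel off will, together with $\rad(V)$, be absorbed into the ``supertropically alternate subspace'' required by the statement (note $\rad(V)$ itself is trivially supertropically alternate, as $\bil vv \in \tGz$ for $v \in \rad(V)$). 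So it suffices to produce, inside the nondegenerate $V'$, a thick subspace that is a g-orthogonal sum of an anisotropic subspace and a supertropically alternate one; the final thick subspace of $V$ is then this together with $\rad(V)$.

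Next I would run an inductive Gram--Schmidt argument on $V'$ of rank $m$. Take a d,s-base; using Example~\ref{1811} (Case~II) one can, by replacing a base vector $v_1$ by $w = v_1 + \bt v_2$ for large $\bt$, arrange base vectors to be g-nonisotropic whenever a g-nonisotropic vector is available in the relevant plane. The dichotomy is: either $V'$ (or the current subspace under consideration) contains a g-nonisotropic vector $b$ that is Cauchy--Schwartz against the rest, or it does not. In the first case, set $b_1 := b$, take $\bt_1 \in \tT$ with $\bt_1 \nucong \bil {b_1}{b_1}$, and apply Remark~\ref{16}: for each remaining base vector $v$ replace it by $v'_{\{b_1\}} = v + \frac{\bil v{b_1}}{\bt_1} b_1$, which is g-orthogonal to $b_1$. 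This splits off $Fb_1$ as a g-orthogonal summand and reduces the rank by one; by Lemma~\ref{GS0} (and Proposition~\ref{CS1} when we want to control Cauchy--Schwartzness) the complementary subspace inherits the structure needed to continue the induction. Collecting all the $b_i$ obtained this way gives a g-orthogonal Cauchy--Schwartz set of g-nonisotropic vectors spanning an anisotropic subspace $W_{\mathrm{an}}$.

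In the second case — no g-nonisotropic vector is Cauchy--Schwartz against its complement — I claim the remaining subspace $W_{\mathrm{alt}}$ is supertropically alternate. Here is where the rank-$2$ analysis of Example~\ref{1811} does the work: in a plane where no Cauchy--Schwartz g-nonisotropic vector exists, either every vector is already g-isotropic, or, whenever $\bil vv$ is tangible, the relevant cross term $\bil vw + \bil wv$ $\nu$-dominates $\bil vv$, so by the first $\lmodg$ in \eqref{CS00} the combinations become g-isotropic; more directly, using Proposition~\ref{18112}, once one exhibits an s-base of g-isotropic vectors for $W_{\mathrm{alt}}$ (which the failure of Cauchy--Schwartz forces via the ``g-isotropic strip'' computation of Case~II.a--II.b), supertropical symmetry upgrades this to supertropical alternateness on all of $W_{\mathrm{alt}}$. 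Finally one checks $W_{\mathrm{an}} + W_{\mathrm{alt}}$ is the g-orthogonal sum in the sense of the last definition: g-orthogonality of the two pieces is built in by the Gram--Schmidt step, and no pair $\{w_1,w_2\}$ with $w_i \in W_i$ is Cauchy--Schwartz precisely because $w_2$ is g-isotropic, so $\bil {w_2}{w_2} \in \tGz$ and \eqref{CS} cannot hold with strict $\nu$-inequality. Taking this sum together with $\rad(V)$ (folded into the alternate part) yields the desired thick subspace, since at each Gram--Schmidt step the nonsingular transformation $v \mapsto v'_{\{b_i\}}$ preserves rank.

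The main obstacle I anticipate is making the dichotomy clean and genuinely inductive: one must ensure that after splitting off a g-nonisotropic Cauchy--Schwartz vector $b_1$, the complementary subspace $\{v'_{\{b_1\}}\}$ is still nondegenerate (or at least that its radical can be shunted into the alternate part) and that ``Cauchy--Schwartz against the complement'' is a property that survives the projection — this is exactly the content one extracts from Lemma~\ref{GS0} and Proposition~\ref{CS1}, but bookkeeping the g-orthogonality and the strictness hypotheses simultaneously across the induction is the delicate point. The thickness claim is comparatively routine, following from nonsingularity of each Gram--Schmidt step as in the discussion around $\oA = I_A A$.
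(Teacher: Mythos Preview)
Your strategy matches the paper's---build a maximal anisotropic piece via Gram--Schmidt, then argue the complement is supertropically alternate---and you invoke the right ingredients (Remark~\ref{16}, Lemma~\ref{GS0}, Example~\ref{1811}, Proposition~\ref{18112}). The reduction to the nondegenerate case via Remark~\ref{15}(ii) is harmless but unnecessary; the paper works directly in $V$.

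The genuine gap is in your ``second case.'' Your termination condition is that no g-nonisotropic vector in the remaining space is Cauchy--Schwartz \emph{against its complement}, and from this you try to extract a g-isotropic s-base via the rank-$2$ g-isotropic strip of Example~\ref{1811}. But that termination condition does not by itself force the remaining base vectors to be g-isotropic: a g-nonisotropic $v$ could fail Cauchy--Schwartz with one neighbor yet remain g-nonisotropic, and the ``g-isotropic strip'' is a planar phenomenon with no clear inductive extension. The paper closes this gap with a different, sharper use of Example~\ref{1811}: having built $W$ with d-base $\tB_W=\{w_1,\dots,w_m\}$ maximally, suppose some g-nonisotropic $v\in V\setminus W$ is Cauchy--Schwartz with \emph{one} $w_j$; then $\bt w_j+v$ for large $\bt$ is Cauchy--Schwartz with \emph{every} $w_i$ (since for large $\bt$ the pair $\{\bt w_j+v,\,w_i\}$ behaves like $\{w_j,w_i\}$), so Gram--Schmidt would extend $W$, contradicting maximality. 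Hence no vector of an extended d-base $\tB\supset\tB_W$ is Cauchy--Schwartz with any $w_j$, and now the Proposition immediately following Lemma~\ref{GS0} (not the g-isotropic strip) is what makes the Gram--Schmidt projections $v'_{\tB_W}$ g-isotropic; Proposition~\ref{18112} then finishes. Your invocation of Example~\ref{1811} is aimed at producing g-nonisotropic vectors, which is the opposite direction; you should redirect it to this ``upgrade from one $w_j$ to all $w_i$'' step. A smaller issue: your verification that no pair $\{w_1,w_2\}$ with $w_i\in W_i$ is Cauchy--Schwartz is not correct as stated---$\bil{w_2}{w_2}\in\tGz$ does not by itself prevent the strict $\nu$-inequality in \eqref{CS}, since a ghost can still $\nu$-dominate another ghost; one needs the not-Cauchy--Schwartz conclusion for the base vectors coming out of the maximality argument.
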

\begin{proof} Apply the Gram-Schmidt procedure as far as possible to obtain the anisotropic subspace
$W$ with g-nonisotropic Cauchy-Schwartz d-base $\tB_W = \{ w_1,
\dots, w_m \}.$ If we have some other g-nonisotropic vector $v \in
V\setminus W$ such that $\{ w_j, v \}$ is  Cauchy-Schwartz for
some $j$, then, in view of Example~\ref{1811}, we can replace $v$
by $\bt w_j + v$ for large enough $\bt$, which is Cauchy-Schwartz
with each of $w_1, \dots, w_m$, and thus expand $W$ by another
application of the Gram-Schmidt procedure, a contradiction.

We conclude that no vector in $V\setminus W$ is  Cauchy-Schwartz
with any member of $\tB _W$. We expand $\tB_W$ to a d-base $\tB$
of $V$. The remaining vectors in $\tB \setminus \tB_W$ are not
Cauchy-Schwartz with $W$ and thus produce g-isotropic vectors, in
view of~Lemma~\ref{GS0}, and the space they generate is
supertropically alternate in view of~Proposition~\ref{18112} .
\end{proof}

\section{Supertropical quadratic forms}

Let us elaborate on the Cauchy-Schwartz property to get a
supertropical version of quadratic forms. As in the classical
case, given  a supertropical  bilinear form~$B$, we define $Q_B(v)
:= \bil vv.$ The following observation is easy but surprising.

\begin{prop}\label{q0} If $V$ is Cauchy-Schwartz with respect to   a supertropically symmetric  bilinear
form~$B$, then $Q_B(v+w)  \lmodg Q_B(v) + Q_B(w)$ for all $v,w \in
V$. Furthermore, if the bilinear form $B$ is strict, then
$Q_B(v+w) = Q_B(v) + Q_B(w)$.
\end{prop}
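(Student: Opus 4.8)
The plan is to prove the two assertions in turn, the $\lmodg$ inequality first and then, under the strictness hypothesis, the equality. Throughout, write $g := \bil vw + \bil wv$, which lies in $\tGz$ because $B$ is supertropically symmetric.

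The inequality $Q_B(v+w) \lmodg Q_B(v) + Q_B(w)$ is essentially free: it is the second $\lmodg$ of the chain \eqref{CS00}. Indeed, \eqref{0.4} gives $\bil{v+w}{v+w} \lmodg \bil vv + \bil vw + \bil wv + \bil ww = (\bil vv + \bil ww) + g$, and since $g \in \tGz$ it is absorbed into the surpassing relation, so $Q_B(v+w) = \bil{v+w}{v+w} \lmodg \bil vv + \bil ww = Q_B(v) + Q_B(w)$. Note this step uses only supertropical symmetry, not Cauchy--Schwartz.

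Now assume $B$ is strict, so \eqref{0.4} becomes an equality and $Q_B(v+w) = \bigl(Q_B(v) + Q_B(w)\bigr) + g$; everything reduces to showing that $g$ is \emph{absorbed}, i.e.\ $\bigl(Q_B(v) + Q_B(w)\bigr) + g = Q_B(v) + Q_B(w)$. The first move is to transfer the Cauchy--Schwartz property from the given d,s-base $\tB = \{s_1, \dots, s_n\}$ of $V$ to the arbitrary pair $\{v,w\}$. Writing $v = \sum_i \a_i s_i$ and $w = \sum_j \bt_j s_j$, strictness of $B$ together with the Frobenius formula \eqref{eq:Frobenius} gives $\bil vw^2 = \sum_{i,j} \a_i^2 \bt_j^2 \bil{s_i}{s_j}^2 \nule \sum_{i,j} \a_i^2 \bt_j^2 \bil{s_i}{s_i}\bil{s_j}{s_j} \nule \bil vv\,\bil ww$ (exactly the computation in the proof of Proposition~\ref{CS1}, which uses the Cauchy--Schwartz inequality for the $s_i$ but no nondegeneracy), and likewise $\bil wv^2 \nule \bil vv\,\bil ww$. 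Hence $\{v,w\}$ is weakly Cauchy--Schwartz, so by Lemma~\ref{comp1} the vectors $v$ and $w$ are \compatible; concretely, Frobenius yields $g^2 = \bil vw^2 + \bil wv^2 \nule \bil vv\,\bil ww \nule (\bil vv + \bil ww)^2$, that is $g \nule \bil vv + \bil ww$.

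It then remains to upgrade $g \nule \bil vv + \bil ww$ to genuine absorption, and this is the delicate point. If $\bil vv \nucong \bil ww$, then $\bil vv + \bil ww = \bil vv^{\nu}$ is a ghost with $\bil vv + \bil ww \nuge g$, so it absorbs $g$. Otherwise $\bil vv \not\nucong \bil ww$, whence $\bil vv\,\bil ww \nul (\bil vv + \bil ww)^2$; combined with $g^2 \nule \bil vv\,\bil ww$ this forces the \emph{strict} inequality $g \nul \bil vv + \bil ww$, and again $g$ is absorbed. Either way $Q_B(v+w) = Q_B(v) + Q_B(w)$. I expect the main obstacle to be precisely this last paragraph together with the base-expansion step before it: the Cauchy--Schwartz hypothesis is phrased through a base and must first be pushed out to the pair $\{v,w\}$, and then the boundary case $\bil vv \nucong \bil ww$ — in which $Q_B(v) + Q_B(w)$ is itself a ghost — has to be handled separately, equality surviving there because that ghost is already $\nu$-large enough to swallow $g$ rather than because of a strict $\nu$-inequality.
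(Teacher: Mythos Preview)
Your proof is correct and follows essentially the same route as the paper: obtain the $\lmodg$ from \eqref{CS00} alone, and for the strict case show that $v,w$ are compatible so that Lemma~\ref{quad0} (whose equality case is exactly your final case split on $\bil vv \nucong \bil ww$) gives $Q_B(v+w)=Q_B(v)+Q_B(w)$. The paper's proof simply cites Lemmas~\ref{comp1} and~\ref{quad0}; you are more explicit in that you actually carry out the base-expansion computation (the argument of Proposition~\ref{CS1}) to push the Cauchy--Schwartz inequality from the s,d-base to the arbitrary pair $\{v,w\}$, a step the paper's one-line proof leaves implicit.
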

\begin{proof} $Q_B(v+w) = \bil {v+w}{v+w}   \lmodg \bil vv + \bil ww =Q_B(v) +
Q_B(w)$, in view of \eqref{CS0} and Lemma~\ref{comp1}.
 When  $B$
is strict, we get the second assertion by Lemmas~\ref{quad0} and ~\ref{comp1}.
 \end{proof}


\begin{defn}\label{quasil} A \textbf{(supertropical) quasilinear quadratic form} on a vector space $V$ is a function
$Q  : V \to F$ satisfying
$$ Q(\a v) = \a^2 Q(v), \quad  Q( v+w) \lmodg  Q(v)+Q(w), \quad  \forall \a \in F, \ v,w \in V.$$
 The quadratic form $Q$ is  \textbf{\squasilinear} if it satisfies
\begin{equation}\label{quad2} Q(v+w) = Q(v)+Q(w), \quad \forall   v,w \in
V. \end{equation} (In this case, we also say that the quadratic
space $V$ is \textbf{\squasilinear}.)

A \textbf{(supertropical)  quasilinear quadratic space} is a pair
$(V,Q)$ where $Q : V\to F$ is a  quasilinear quadratic form. We
say that the
  space is \textbf{\squasilinear} when the underlying
quadratic form is \squasilinear. \end{defn}

 Since supertropical algebra has ``characteristic 1'' and, in particular,  often has properties of
characteristic~2, one should expect the  theory of supertropical
quadratic forms also to behave as the classical theory of
quasilinear quadratic forms over fields of characteristic 2, which
is treated for example in \cite[\S1.6]{K} and \cite[II,\S10]{EKM}
(where the term ``totally singular''  is used instead of
``quasilinear'').

\begin{rem}\label{rise} By Proposition~\ref{q0}, any    supertropically symmetric bilinear form $B$
gives rise to a  quasilinear quadratic form~$ Q(v): = \bil vv,$
which is strictly quasilinear when $B$ is strict.
\end{rem}

\begin{defn}
We call $Q$ of Remark~\ref{rise} the \textbf{quadratic form
associated to}  $B$, and say that $Q$ \textbf{admits} the bilinear
form $B$.
\end{defn}

 In particular, $v$ is g-isotropic iff $Q(v)\in
\tGz.$ Note that a quadratic form $Q$ may admit many different bilinear forms.

The quadratic form associated to a non-symmetric bilinear form
  might fail to be quasilinear.
\begin{example}  Take the quadratic form
associated to the   bilinear form $B$ with base $\{e_1, e_2\}$
whose matrix is $\vmMat{\fzero}{\fone}{\fzero }{ \fzero},$ and $b
= e_1 + e_2$. Then $Q(b) = \bil {e_1}{e_2}   = \fone . $ This is
not quasilinear.
\end{example}

In this paper we have been focusing on symmetric bilinear forms,
and thus we treat quasilinear quadratic forms. Although we focus
on \squasilinear \ quadratic forms here, here is an important
example that is not \squasilinear.

\begin{example}[The hyperbolic plane]\label{hp} $ $
\label{hyplane}  \begin{enumerate} \eroman

    \item  Take the quadratic form
associated to the   bilinear form $B$ whose matrix  is
$\vmMat{\fzero}{\fone}{\fone  }{ \fzero}.$ Thus, $e_1$ and $e_2$
are isotropic. Take $b = e_1 + e_2$. Then $Q(b) = \bil
{e_1}{e_2}^\nu = \fone^\nu.$ \pSkip

\item More generally, we say that $V$ is a \textbf{supertropical
hyperbolic plane} if it has a base $\{e_1, e_2\}$ of g-isotropic
vectors, for which $Q(e_1 +e_2) >_{\nu} Q(e_1) + Q(e_2).$ \pSkip

\item Any supertropical hyperbolic plane has
  a symmetric bilinear form with respect to which $\bil {e_1}{e_2} \in \tT$ and $e_1, e_2$
  are not Cauchy-Schwartz; namely we define $\bil {e_1}{e_2} = \bil {e_2}{e_1}   \in \tT$ to be
  $\a \in \tT$ for which $\a  \nucong
Q(e_1 +e_2) .$  \end{enumerate}

\end{example}

\begin{defn}   The \textbf{orthogonal sum}  $Q=Q_1+Q_2$ of two
quadratic spaces $(V_1,Q_1)$ and $(V_2,Q_2)$  is defined as $(V_1
\oplus V_2, Q)$ where $Q(v_1,v_2) = Q_1(v_1)+Q_2(v_2).$

 An orthogonal sum of hyperbolic
planes is called a \textbf{hyperbolic space}.
 \end{defn}

\begin{lem} Any \squasilinear \ quadratic space   has a thick subspace which
is an orthogonal sum of 1-dimensional quadratic subspaces.
\end{lem}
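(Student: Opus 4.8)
The plan is to diagonalize the quasilinear quadratic form $Q$ with respect to a suitable d,s-base, exactly as one does for a totally singular quadratic form in characteristic $2$. By definition of strict quasilinearity, $Q(v+w)=Q(v)+Q(w)$ for all $v,w\in V$; combined with $Q(\a v)=\a^2 Q(v)$, this means that once we fix a d,s-base $\tB=\{b_1,\dots,b_n\}$ of (a thick subspace of) $V$, the value of $Q$ on any vector $v=\sum_i \a_i b_i$ surpasses (and, by strictness, equals) $\sum_i \a_i^2 Q(b_i)$. First I would invoke the structure theory of the earlier sections: pick a d,s-base $\tB$ of $V$ (replacing $V$ by a thick closed subspace $V_\tB$ if necessary, as in Definition~\ref{sdbase0} and the subsequent lemmas, so that an s-base is genuinely available). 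Then set $F b_i$ to be the $i$-th coordinate line, with quadratic form the restriction $Q|_{Fb_i}$, which is $1$-dimensional.

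The key step is to check that the internal direct sum $\bigoplus_i F b_i$, equipped with the quadratic form $Q'(\sum_i \a_i b_i) := \sum_i \a_i^2 Q(b_i)$, agrees with $Q$ on the thick subspace spanned by $\tB$. For the surpassing relation this is immediate from iterating $Q(v+w)\lmodg Q(v)+Q(w)$ and using $Q(\a b_i)=\a^2 Q(b_i)$; for equality we use strict quasilinearity, Equation~\eqref{quad2}, directly. Thus the assignment $\sum_i \a_i b_i \mapsto (\a_1 b_1,\dots,\a_n b_n)$ is a morphism of quadratic spaces that is ghost monic (its ghost kernel consists of vectors all of whose tangible components vanish, hence lies in $\tHz$) and tropically onto the span of $\tB$, which is thick in $V$ by construction. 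Hence the thick subspace $\sum_i F b_i$ of $V$ is the orthogonal sum, in the sense of Definition of orthogonal sum of quadratic spaces, of the $1$-dimensional quadratic subspaces $(Fb_i, Q|_{Fb_i})$.

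The main obstacle I anticipate is bookkeeping around the distinction between an internal orthogonal sum inside $V$ and the external orthogonal sum $V_1\oplus V_2$ of the definition: one must be careful that the natural map from the external sum into $V$ is an iso onto a thick subspace, not an honest isomorphism, since supertropical spaces admit proper thick subspaces. This is handled by the same device used throughout the paper --- working with $V_\tB$ and the quasi-identity $I_A$ --- so no essentially new idea is needed, only care that the chosen base is a d,s-base so that $n=\rank(V)$ and the span is thick. A secondary point is that we do not need $B$ here at all: the statement is about an abstract strictly quasilinear quadratic form, and the decomposition is forced purely by the additivity~\eqref{quad2} and homogeneity of $Q$, so the proof is genuinely short once the base-change machinery of the previous sections is cited.
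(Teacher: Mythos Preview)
Your proposal is correct and follows the same idea as the paper, whose entire proof is ``Just take a d-base.'' The paper is content with the observation that the span of any d-base $\{b_1,\dots,b_n\}$ is thick and that strict quasilinearity gives $Q(\sum_i \a_i b_i)=\sum_i \a_i^2 Q(b_i)$, exhibiting that span as an orthogonal sum of the lines $Fb_i$; the extra machinery you invoke (passing to a closed d,s-base via $V_\tB$, checking that the comparison map from the external sum is an iso) is sound but unnecessary here, since thickness only needs rank, not an s-base.
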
\begin{proof} Just take a d-base.\end{proof}

\begin{example} In view of Proposition~\ref{q0}, the quasilinear quadratic form obtained
from any Cauchy-Schwartz base with respect to a supertropically
symmetric, strict  bilinear form $B$ is  \squasilinear.\end{example}

\begin{rem}\label{bf}
 Conversely, given
a \squasilinear \ quadratic form $Q$   over a  semifield $F$
satisfying $F = F^2$, we have a canonical  bilinear form $B_Q$
admitted by $Q$, given by:

\begin{equation} B_Q(v,w) = \sqrt{Q(v) Q(w)}.\end{equation}
\end{rem}

\begin{thm}\label{quadlin} If $(V,Q)$ is a \squasilinear \ quadratic space, then $B_Q$ is  a strict,   symmetric bilinear form,
with respect to which $V$ is Cauchy-Schwartz.\end{thm}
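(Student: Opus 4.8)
The plan is to verify each of the three claimed properties of $B_Q$ directly from the formula $B_Q(v,w) = \sqrt{Q(v)Q(w)}$, using the quasilinearity identity $Q(v+w) = Q(v)+Q(w)$ and the Frobenius formula \eqref{eq:Frobenius}. \emph{Symmetry} is immediate, since the defining formula is visibly symmetric in $v$ and $w$. For \emph{bilinearity}, I must check that $w \mapsto B_Q(v,w)$ is a linear functional: the scaling property $B_Q(v,\a w) = \sqrt{Q(v)Q(\a w)} = \sqrt{Q(v)\a^2 Q(w)} = \a\sqrt{Q(v)Q(w)} = \a B_Q(v,w)$ uses $Q(\a w) = \a^2 Q(w)$ together with $F = F^2$ (so that square roots behave multiplicatively and $\sqrt{\a^2} = \a$ for tangible $\a$); the additivity-up-to-ghost and in fact exact additivity will follow from \squasilinear ity once strictness is in hand.

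For \emph{strictness}, I must establish
$$B_Q(\a_1 v_1 + \a_2 v_2, \bt_1 w_1 + \bt_2 w_2) = \sum_{i,j} \a_i \bt_j B_Q(v_i, w_j).$$
The left side is $\sqrt{Q(\a_1 v_1 + \a_2 v_2)\,Q(\bt_1 w_1 + \bt_2 w_2)}$, which by \squasilinear ity equals $\sqrt{(\a_1^2 Q(v_1) + \a_2^2 Q(v_2))(\bt_1^2 Q(w_1) + \bt_2^2 Q(w_2))}$. Expanding the product inside gives $\sum_{i,j}\a_i^2\bt_j^2 Q(v_i)Q(w_j)$, and by the Frobenius formula this is exactly $\big(\sum_{i,j}\a_i\bt_j\sqrt{Q(v_i)Q(w_j)}\big)^2$; taking square roots yields the right side. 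The same computation with all $v_i = w_i$ and one summand gives exact additivity $B_Q(v+w,u) = B_Q(v,u)+B_Q(w,u)$, confirming that $B_Q$ is a genuine bilinear form (not merely one up to ghost).

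Finally, for the \emph{Cauchy-Schwartz} property I compute $B_Q(v,w)^2 + B_Q(w,v)^2 = 2\,Q(v)Q(w) = (Q(v)Q(w))^\nu$, whereas $B_Q(v,v)\,B_Q(w,w) = \sqrt{Q(v)^2}\sqrt{Q(w)^2} = Q(v)Q(w)$ (using $F = F^2$ and that $Q(v), Q(w)$ are honest elements with well-defined square roots). Thus $B_Q(v,v)B_Q(w,w) = Q(v)Q(w)$ is $\nu$-matched — not strictly larger — than $B_Q(v,w)^2 + B_Q(w,v)^2$, so at first glance we only get the \emph{weak} Cauchy-Schwartz inequality \eqref{CS}. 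The main obstacle is therefore to see why we in fact get \emph{strict} $\nu$-inequality, i.e. why $\{v,w\}$ is Cauchy-Schwartz rather than merely weakly so: the resolution is that the formula $B_Q(v,w) = \sqrt{Q(v)Q(w)}$ forces $B_Q(v,w)$ to be \emph{ghost} whenever $Q(v)$ or $Q(w)$ is ghost, and tangible only when both $Q(v),Q(w)$ are tangible; so one should interpret the definition in $F = F^2$ carefully — either $B_Q(v,w)$ itself lies in $\tGz$ (when $v$ or $w$ is g-isotropic), making \eqref{CS} trivial, or $Q(v),Q(w)\in\tT$ and then $B_Q(v,w)^2+B_Q(w,v)^2 = (Q(v)Q(w))^\nu$ is ghost while $B_Q(v,v)B_Q(w,w) = Q(v)Q(w)$ is tangible, giving the strict inequality $B_Q(v,w)^2+B_Q(w,v)^2 <_\nu B_Q(v,v)B_Q(w,w)$ demanded by Definition~\ref{CauS}. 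Since this holds for every pair from any d,s-base of $V$, the space $V$ is Cauchy-Schwartz.
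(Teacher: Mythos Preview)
Your treatment of symmetry and strictness is essentially the paper's argument: the paper proves additivity in one variable via
\[
\bil{v+v'}{w}^2 \;=\; Q(v+v')Q(w) \;=\; Q(v)Q(w)+Q(v')Q(w) \;=\; \bil vw^2+\bil{v'}w^2 \;=\; (\bil vw+\bil{v'}w)^2,
\]
and takes square roots; you do the two-variable version of the same computation using Frobenius. So far the approaches coincide.

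The genuine gap is in your Cauchy--Schwartz paragraph. You correctly compute $B_Q(v,v)B_Q(w,w)=Q(v)Q(w)$ and $B_Q(v,w)^2+B_Q(w,v)^2=(Q(v)Q(w))^\nu$, and you correctly note that this gives only the weak inequality. Your proposed resolution, however, is wrong: you argue that when $Q(v),Q(w)\in\tT$ the first quantity is tangible and the second is ghost, ``giving the strict inequality $<_\nu$''. But by definition $a<_\nu b$ means $a^\nu<b^\nu$ in the ordered monoid $\tG$; a tangible element and its ghost have \emph{identical} $\nu$-values, so $(Q(v)Q(w))^\nu \nucong Q(v)Q(w)$ and the inequality is never strict (except trivially when one side is $\fzero$). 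The ghost/tangible dichotomy is simply not the same as the $\nu$-ordering, and conflating them is the error. What your computation actually establishes is that every pair of vectors is \emph{weakly} Cauchy--Schwartz with equality of $\nu$-values; the strict form in Definition~\ref{CauS} does not follow from your argument. (The paper's own proof, incidentally, does not address this point at all---it verifies only strictness and symmetry of $B_Q$.)
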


\begin{proof} $\bil {v+v'}w ^2 = Q(v+v')Q(w) = Q(v)Q(w) +
Q(v')Q(w) = \bil v w ^2 +  \bil {v'} w ^2  = (\bil v w   +  \bil
{v'} w)^2. $ Taking square roots shows that $B_Q$ is a strict
bilinear form, which is obviously symmetric.\end{proof}

\begin{rem} In view of Theorem~\ref{decompthm}, the quasilinear quadratic form
of a symmetric bilinear form can be decomposed into the sum of a
\squasilinear \ quadratic form and a hyperbolic space.\end{rem}
%
%


\end{document}